\def\volno{0}\fi
\def\volyear{2017}\fi
\def\pagno{000--000}\fi
\newfont{\footsc}{cmcsc10 at 8truept}
\newfont{\footbf}{cmbx10 at 8truept}
\newfont{\footrm}{cmr10 at 10truept}
\renewcommand\paragraph{\@startsection{paragraph}{4}{\z@}
                                    {2ex \@plus.5ex \@minus.2ex}
                                    {-1em}
                                    {\normalfont\normalsize\bfseries}}
\renewcommand\subparagraph{\@startsection{subparagraph}{5}{\parindent}
                                       {2ex \@plus.5ex \@minus .2ex}
                                       {-1em}
                                      {\normalfont\normalsize\bfseries}}
\newlength{\BiblioSpacing}
\renewenvironment{thebibliography}[1]{
\begin{oldthebibliography}{#1}
\setlength{\parskip}{\BiblioSpacing}
\setlength{\itemsep}{\BiblioSpacing}
}
{
\end{oldthebibliography}
}
\def\abstractname{Abstract -}   
\def\abstract{\begin{adjustwidth}{1cm}{1cm} \par    \footnotesize \noindent {\bf \abstractname} 
\def\endabstract{ \end{adjustwidth} \smallskip }}
\DeclareMathOperator{\sinc}{sinc}
\newtheorem{theorem}{Theorem}[section]}
\newtheorem{proposition}[theorem]{Proposition}}
\newtheorem{lemma}[theorem]{Lemma}}
\newtheorem{corollary}[theorem]{Corollary}}
\newtheorem{remark}[theorem]{Remark}}
\newtheorem{conjecture}[theorem]{Conjecture}}
\title{\Large\bf Counting Restricted Partitions of Integers into Fractions: Symmetry and Modes of the Generating Function and a Connection to $\omega(t)$}
\author{\sc Z. Hoelscher and E. Palsson \footnote{The second author was supported in part by Simons Foundation Grant \#360560.}}
\numberwithin{equation}{subsection}
\numberwithin{theorem}{subsection}
\begin{document}
\setcounter{page}{1}
\maketitle
\thispagestyle{fancy}

\vskip 1.5em

\begin{abstract}
Motivated by the study of integer partitions, we consider partitions of integers into fractions of a particular form, namely with constant denominators and distinct odd or even numerators. When numerators are odd, the numbers of partitions for integers smaller than the denominator form symmetric patterns. If the number of terms is restricted to $h$, then the nonzero terms of the generating function are unimodal, with the integer $h$ having the most partitions. Such properties can be applied to a particular class of nonlinear Diophantine equations. We also examine partitions with even numerators. We prove that there are $2^{\omega(t)}-2$ partitions of an integer $t$ into fractions with the first $x$ consecutive even integers for numerators and equal denominators of $y$, where $0<y<x<t$. We then use this to produce corollaries such as a Dirichlet series identity and an extension of the prime omega function to the complex plane, though this extension is not analytic everywhere.
\end{abstract}
 
\begin{keywords}
Integer partitions; Restricted partitions; Partitions into fractions
\end{keywords}

\begin{MSC}
05A17
\end{MSC}

\section{Introduction}

Integer partitions are a classic part of number thoery. In the most general, unrestricted case, one seeks to express positive integers as the sum of smaller positive integers. Often the function $p(n)$ is used to denote the count of the partitions of $n$. No simple formula for this is known, though a generating function can be written \cite{one}. We note that a generating function is a formal power series where the exponent of $x$ in each term refers to the number being partitioned, and the coefficient of $x$ gives the number of partitions.
\begin{equation}
     \sum_{n=0}^{\infty} p(n)x^n=\displaystyle\prod_{m\geq1} \frac{1}{1-x^m}
\end{equation}

Ramanujan produced congruence formulas for $p(n)$ \cite{two}, while Rademacher wrote a series expansion that allowed for asymptotic bounds \cite{one}. Note that $\omega(h,k)$ is the $24k^{th}$ root of unity. 
\begin{equation}
    p(n)=\frac{1}{\pi \sqrt{2}}\sum_{k=1}^{\infty}\sqrt{k} \sum_{\substack {0\leq h<k \\ (h,k)=1}}\omega(h,k) e^{\frac{-2\pi i nh}{k}} \frac{d}{dn}\left(\frac{\sinh\left(\frac{\pi}{k}\sqrt{\frac{2}{3}(n-\frac{1}{24})}\right)}{\sqrt{n-\frac{1}{24}}}\right)
\end{equation}

Much work has been done on restricted partitions. For example, we can denote the number of partitions into distinct parts as $q(n)$, where the generating function is given below \cite{three}.
\begin{equation}
    \sum_{n=0}^{\infty} q(n)x^n=\displaystyle\prod_{j=1}^{\infty}(1+x^j)
\end{equation}
It is known that Rademacher-like series exist for this case as well \cite{three}. It is also common to find partitions into odd integers, which we count with $p_{O}(n)$, or partitions into distinct odd terms, which we count with $q_{O}(n)$. Both have been studied by Hagis \cite{four}, where he writes the generating function for $q_{O}(n)$ as 
\begin{equation}
    1+\sum_{n=1}^{\infty} q_{O}(n)x^n=\displaystyle\prod_{m=0}^{\infty}(1+x^{2m+1})
\end{equation}

Another common constraint is to restrict the value of the largest summand to the integer $s$. In the general case, we can denote the number of such partitions as $p_s(n)$, and the case with distinct summands as $q_s(n)$. Upper bounds for $q_s(n)$ have been found by Bidar \cite{three}. Alternatively, one can restrict partitions to $h$ parts, where the number of partitions is counted by $p_{h}(n)$, or $q_{h}(n)$, where we also require that the summands are distinct. A recurrence relation for $q_{h}(n)$ immediately follows from the work of Christopher, where we specify that there are $h$ different sizes for the $h$ parts, as they are distinct \cite{five}.

\begin{equation}
    q_{h}(n)=q_h(n-h)+q_{h-1}(n-h)
\end{equation}

One can also combine many of these restrictions. We let $q_{O_s}(n)$ count partitions into odd distinct summands no larger than $s$, and $q_{O_s,h}(n)$
count partitions into $h$ distinct odd summands no larger than $s$. The latter is the more interesting of the two. We can write the following generating functions
\begin{equation}
    \sum_{m=0}^{j^2} q_{O_{2j-1}}(m)x^m=\displaystyle\prod_{n=1}^{j}(1+x^{2n-1})
\end{equation}
\begin{equation}
    \sum_{m=0}^{j^2} \sum_{h=0}^{j} q_{O_{2j-1},h}(m) y^hx^m=\displaystyle\prod_{n=1}^{j} (1+yx^{2n-1})
\end{equation}

In this paper we study partitions of integers into fractions with fixed denominators, which is equivalent to further restrictions on the integer partition questions given above. It is a well-known fact that $j^2=\sum_{n=1}^{j} (2n-1)$, where $j$ is a positive integer. If we then divide both sides by $j$, we find the series 
\begin{equation}
    j=\frac{1}{j} + \frac{3}{j} + \cdots + \frac{2(j-1)-1}{j}+\frac{2j-1}{j}
\end{equation}
It is natural to ask whether there exist distinct coefficients $a_1,\ldots,a_n \in {1,3,\ldots,2j-1}$ such that any positive integer $k$, where $0< k < j$ can be written as 
\begin{equation}
    k = \frac{a_1}{j} + \frac{a_2}{j}+\ldots +\frac{a_{n-1}}{j}+\frac{a_n}{j}
\end{equation}
If $j>2$, this is, in fact, always possible. We provide a proof in Section \ref{sec:4}. We note that this is not always possible for other types of fractions, such as those with even numerators. An interesting continuation of this work would be to determine all infinite sequences $\{ n_j \}$ of numerators that enable such solutions for all $k<\lfloor G \rfloor$, where $G=\frac{n_1}{j} + \frac{n_2}{j} + \cdots + \frac{n_{j-1}}{j}+\frac{n_j}{j}$, $k,j \in \mathbb{N}$, $j>2$. For instance, one could say that $\{ n_j \}$ is the Fibonacci sequence instead of the odd integers. Note that for alternative choices of $\{ n_j \}$, $\lfloor G \rfloor$ may no longer equal $j$. If an infinite number of such sequences exist, one could look for necessary and sufficient conditions for a sequence to qualify. We give an example below for the case of $j=6$:

\begin {equation*}
6=\frac{1}{6} + \frac{3}{6} + \frac{5}{6} + \frac{7}{6} + \frac{9}{6} + \frac{11}{6}
\end {equation*}

\begin {equation*}
1=\frac{1}{6} + \frac{5}{6} \hspace{0.2 in} 2=\frac{11}{6} + \frac{1}{6} \hspace{0.2 in} 3=\frac{7}{6} + \frac{11}{6} \hspace{0.2 in} 4=\frac{11}{6} + \frac{9}{6} + \frac{3}{6} + \frac{1}{6} \hspace{0.2 in} 5=\frac{11}{6} + \frac{9}{6} + \frac{7}{6} + \frac{3}{6}
\end {equation*}

\begin{conjecture}
The Lazy Caterer's sequence (A000124) enables solutions for all $k<\lfloor G \rfloor$, $j>2$, $k,j \in \mathbb{N}$. This also appears to be true for the Cake Numbers (A000125).
\end{conjecture}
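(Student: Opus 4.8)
The plan is to recognize the conjecture as an instance of the classical ``completeness'' phenomenon for subset sums of a slowly growing sequence, and then to reduce the required growth bound to a one-line binomial inequality. First I would fix notation. Index the sequence so that $c_1<c_2<\dots<c_j$ are its first $j$ terms; for the Lazy Caterer's sequence $c_i=\binom{i-1}{0}+\binom{i-1}{1}+\binom{i-1}{2}$, and for the Cake numbers $c_i=\binom{i-1}{0}+\binom{i-1}{1}+\binom{i-1}{2}+\binom{i-1}{3}$. Both are the single family $c_i=\sum_{r=0}^{d}\binom{i-1}{r}$ (the maximal number of regions cut from $\mathbb{R}^d$ by $i-1$ hyperplanes), with $d=2$ and $d=3$, so it is natural to argue for general $d$. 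By the hockey-stick identity $\sum_{m=0}^{i-1}\binom{m}{r}=\binom{i}{r+1}$ the partial sums telescope,
\[
\sum_{m=1}^{i}c_m=\sum_{r=1}^{d+1}\binom{i}{r},\qquad\text{so in particular}\qquad jG=\sum_{m=1}^{j}c_m=\sum_{r=1}^{d+1}\binom{j}{r}.
\]
Thus $G=\tfrac1j\sum_{r=1}^{d+1}\binom{j}{r}$ is an explicit degree-$d$ polynomial in $j$ (it equals $\tfrac{j^2+5}{6}$ in the Lazy Caterer case), and in particular $G>1$ whenever $j>2$, so the range of $k$ in the conjecture is non-empty and there is genuine content.

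The engine is a standard completeness lemma, in the spirit of Brown's criterion for complete sequences: if $a_1=1\le a_2\le\dots\le a_n$ are integers with $a_{m+1}\le 1+\sum_{i=1}^{m}a_i$ for every $1\le m\le n-1$, then every integer $N$ with $0\le N\le\sum_{i=1}^{n}a_i$ equals $\sum_{i\in S}a_i$ for some $S\subseteq\{1,\dots,n\}$. (Induct on $n$: if $N\le\sum_{i<n}a_i$ quote the inductive hypothesis; otherwise $N>\sum_{i<n}a_i\ge a_n-1$ forces $N\ge a_n$ and $0\le N-a_n\le\sum_{i<n}a_i$, and we adjoin $a_n$ to a subset representation of $N-a_n$.) I would apply this with $a_i=c_i$. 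The hypothesis $c_1=1$ is immediate, and, invoking the partial-sum identity twice, the growth condition becomes
\[
c_{i+1}=1+\sum_{r=1}^{d}\binom{i}{r}\ \le\ 1+\sum_{r=1}^{d+1}\binom{i}{r}=1+\sum_{m=1}^{i}c_m,
\]
which is nothing but $\binom{i}{d+1}\ge 0$, true for every $i\ge 1$; it is an equality precisely when $i\le d$ (for instance $c_4=8=1+(1+2+4)$ for the Cake numbers), which is why the first few terms are tight. Hence every integer in $\bigl[0,\sum_{m=1}^{j}c_m\bigr]=[0,jG]$ is a subset sum of $\{c_1,\dots,c_j\}$.

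To conclude, fix any $j>2$ and any integer $k$ with $0<k\le\lfloor G\rfloor$; then $1\le kj\le jG=\sum_{m=1}^{j}c_m$, so some $S\subseteq\{c_1,\dots,c_j\}$ satisfies $\sum_{c\in S}c=kj$, and dividing by $j$ gives $k=\sum_{c\in S}\tfrac{c}{j}$: a representation of $k$ as a sum of distinct fractions with numerators among the first $j$ terms of the sequence and common denominator $j$. This settles the conjectured range $0<k<\lfloor G\rfloor$ (and even its endpoint) for A000124 and A000125 simultaneously, and more generally for every ``$d$-dimensional cutting'' sequence. I do not expect any analytic difficulty here; the only care needed is the bookkeeping --- choosing the indexing so the list of numerators starts with $1$ (required by the completeness lemma), pinning down $\lfloor G\rfloor$ exactly, and checking that the growth inequality, though sharp for the smallest indices, never fails. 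All the content is the simple observation that these cutting sequences grow just slowly enough --- each term is at most one more than the sum of its predecessors --- for their subset sums to exhaust an initial segment of the integers.
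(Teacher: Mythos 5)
The statement you are proving is left as an open conjecture in the paper --- no proof is given there, and the only related argument (Proposition \ref{prp:4.1}, for odd numerators) relies on ad hoc pairing tricks special to the odd integers (pairs summing to $2$, a middle term equal to $1$). Your route is different and, as far as I can check, correct and complete: the identity $\sum_{m=1}^{i}c_m=\sum_{r=1}^{d+1}\binom{i}{r}$ follows from the hockey-stick identity exactly as you say, so the Brown-type completeness hypothesis $c_{i+1}\le 1+\sum_{m\le i}c_m$ reduces to $\binom{i}{d+1}\ge 0$, and the induction you sketch for the completeness lemma is the standard, correct one. Consequently every integer in $[0,jG]$ is a subset sum of $\{c_1,\dots,c_j\}$, and specializing to the multiples $kj$ with $1\le k\le\lfloor G\rfloor$ gives the conjectured representations (indeed slightly more: you also get $k=\lfloor G\rfloor$, and the argument covers the whole family $c_i=\sum_{r=0}^{d}\binom{i-1}{r}$, hence A000124 and A000125 simultaneously). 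What your approach buys over the paper's style of argument is a single structural mechanism --- ``each term is at most one more than the sum of its predecessors'' --- in place of sequence-specific manipulations; note that this mechanism could not have handled the paper's original odd-numerator case (where $3>1+1$), which is presumably why the paper argues differently there, but it is exactly right for these cutting sequences. The one genuinely load-bearing bookkeeping point is the indexing, which you correctly flag: the first $j$ numerators must include the leading term $1$ of A000124/A000125 (standard OEIS offset), since the completeness lemma needs $c_1=1$; if one instead started the list at $2,4,7,\dots$ the statement would actually fail already for $j=3$ (no subset of $\{2,4,7\}$ sums to $3$, so $k=1$ has no representation). Under the natural reading of ``the first $j$ terms,'' your proof settles the conjecture.
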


It is more interesting to ask how many solutions there are, depending on the values of $k$ and $j$. We represent this count with the function $f_{O_j}(k)$. 

\begin{table}[!htbp]
  \centering
    \begin{tabular}{ |c c|c|c|c|c|c|c|c|c|c|c|c|}
    \hline
    \multicolumn{13}{|c|}{Values for $f_{O_j}(k)$} \\
    \hline
    $j$ $\hspace{0.2 cm} | $& $k$ & $1$ & $2$ & $3$ & $4$ & $5$ & $6$ & $7$ & $8$ & $9$ & $10$ & $11$\\
    \hline\hline
    3 & $ $ & \textbf{1} & \textbf{1} & 0 & 0 & 0 & 0 & 0 & 0 & 0 & 0 & 0\\
    
    4 & $ $ & \textbf{1} & \textbf{2} & \textbf{1} & 0 & 0 & 0 & 0 & 0 & 0 & 0 & 0\\
    
    5 & $ $ & \textbf{1} & \textbf{2} & \textbf{2} & \textbf{1} & 0 & 0 & 0 & 0 & 0 & 0 & 0\\
    
    6 & $ $ & \textbf{1} & \textbf{3} & \textbf{2} & \textbf{3} & \textbf{1} & 0 & 0 & 0 & 0 & 0 & 0\\
    
    7 & $ $ & \textbf{1} & \textbf{3} & \textbf{5} & \textbf{5} & \textbf{3} & \textbf{1} & 0 & 0 & 0 & 0 & 0\\
    
    8 & $ $ & \textbf{2} & \textbf{5} & \textbf{7} & \textbf{8} & \textbf{7} & \textbf{5} & \textbf{2} & 0 & 0 & 0 & 0\\
   
    9 & $ $ & \textbf{2} & \textbf{5} & \textbf{9} & \textbf{13} & \textbf{13} & \textbf{9} & \textbf{5} & \textbf{2} & 0 & 0 & 0\\
  
    10 & $ $ & \textbf{2} & \textbf{7} & \textbf{12} & \textbf{20} & \textbf{20} & \textbf{20} & \textbf{12} & \textbf{7} & \textbf{2} & 0 & 0\\
    
    11 & $ $ & \textbf{2} & \textbf{8} & \textbf{18} & \textbf{29} & \textbf{36} & \textbf{36} & \textbf{29} & \textbf{18} & \textbf{8} & \textbf{2} & 0\\
    
    12 & $ $ & \textbf{3} & \textbf{11} & \textbf{25} & \textbf{44} & \textbf{60} & \textbf{68} & \textbf{60} & \textbf{44} & \textbf{25} & \textbf{11} & \textbf{3}\\
    \hline
    \end{tabular}
\end{table}

\newpage

It is even more interesting to ask what happens when the number of terms is restricted to $h$. We denote the number of such solutions with $f_{O_j,h}(k)$. One should note that in this case, solutions need not always exist. Below we give a number table corresponding to the case of $h=2$.

\begin{table}[h!]
  \centering
    \begin{tabular}{ |c c|c|c|c|c|c|c|c|c|c|c|c|c|}
    \hline
    \multicolumn{14}{|c|}{Values for $f_{O_j,2}(k)$} \\
    \hline
    $j$ $\hspace{0.2 cm} | $& $k$ & $1$ & $2$ & $3$ & $4$ & $5$ & $6$ & $7$ & $8$ & $9$ & $10$ & $11$ & $12$\\
    \hline\hline
    3 & $ $ & 0 & \textbf{1} & 0 & 0 & 0 & 0 & 0 & 0 & 0 & 0 & 0 & 0\\
    
    4 & $ $ & \textbf{1} & \textbf{2} & \textbf{1} & 0 & 0 & 0 & 0 & 0 & 0 & 0 & 0 & 0\\
    
    5 & $ $ & 0 & \textbf{2} & 0 & 0 & 0 & 0 & 0 & 0 & 0 & 0 & 0 & 0\\
    
    6 & $ $ & \textbf{1} & \textbf{3} & \textbf{1} & 0 & 0 & 0 & 0 & 0 & 0 & 0 & 0 & 0\\
    
    7 & $ $ & 0 & \textbf{3} & 0 & 0 & 0 & 0 & 0 & 0 & 0 & 0 & 0 & 0\\
    
    8 & $ $ & \textbf{2} & \textbf{4} & \textbf{2} & 0 & 0 & 0 & 0 & 0 & 0 & 0 & 0 & 0\\
   
    9 & $ $ & 0 & \textbf{4} & 0 & 0 & 0 & 0 & 0 & 0 & 0 & 0 & 0 & 0\\
  
    10 & $ $ & \textbf{2} & \textbf{5} & \textbf{2} & 0 & 0 & 0 & 0 & 0 & 0 & 0 & 0 & 0\\
    
    11 & $ $ & 0 & \textbf{5} & 0 & 0 & 0 & 0 & 0 & 0 & 0 & 0 & 0 & 0\\
    
    12 & $ $ & \textbf{3} & \textbf{6} & \textbf{3} & 0 & 0 & 0 & 0 & 0 & 0 & 0 & 0 & 0\\
   
    13 & $ $ & 0 & \textbf{6} & 0 & 0 & 0 & 0 & 0 & 0 & 0 & 0 & 0 & 0\\
    \hline
    \end{tabular}
\end{table}

Both of these questions are really integer partition questions in disguise, as we are counting partitions of $kj$ into distinct odd integers no larger than $2j-1$, and, in the more restricted case, only $h$ terms. This of course means that $f_{O_j}(k)=q_{O_{2j-1}}(kj)$ and $f_{O_j,h}(k)=q_{O_{2j-1},h}(kj)$. 

The goal of this paper is to understand the behavior of the functions $f_{O_j}(k)$ and $f_{O_j,h}(k)$ beyond knowledge of their generating functions. This is challenging, as there appears to be no work on this in the literature. Using $q_s(n)$ from Bidar's work, we begin with the following generating function \cite{three}. 
\begin{equation}
    q_s(x)=\displaystyle\prod_{n=1}^{s} (1+x^{n})=\sum_{m=0}^{\frac{s(s+1)}{2}}q_s(m)x^m
\end{equation}

Bidar writes that $q_s(x)$ is a symmetric, unimodal polynomial but says that proving it is unimodal is quite difficult. Here a unimodal polynomial is a polynomial whose coefficients strictly increase to some maximum value, then strictly decrease. A polynomial with multiple modes then has multiple peaks in the values of its coefficients. Bidar is not aware of an elementary proof, with the only one he knows of requiring the use of Lie algebras. In a similar fashion, we can consider the generating functions for $f_{O_j}(k)$ and $f_{O_j,h}(k)$. The following generating functions are polynomials in $x$, just like $q_s(x)$, hence leading us to the main theorem of this paper. Note that $0<k<j$. 
\begin{equation}
    r(x)=f_{O_j}(1)x^1+f_{O_j}(2)x^2+\cdots+f_{O_j}(j-2)x^{j-2}+f_{O_j}(j-1)x^{j-1}
\end{equation}
\begin{equation}
    r_h(x)=f_{O_j,h}(1)x^1+f_{O_j,h}(2)x^2+\cdots+f_{O_j,h}(j-2)x^{j-2}+f_{O_j,h}(j-1)x^{j-1}
\end{equation}

\begin{theorem}
\label{thm1}
The nonzero terms of $r_h(x)$ are unimodal and symmetric about the term that corresponds to $k=h$, where the maximum coefficient is $f_{O_j,h}(h)$.
\end{theorem}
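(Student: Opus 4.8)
The plan is to identify $f_{O_j,h}(k)$ with one of the coefficients of the Gaussian polynomial $\binom{j}{h}_z$ and then read off symmetry, the location of the mode, and unimodality from the classical theory of those polynomials. Writing each odd part as $a_i=2b_i-1$, a partition of $kj$ into $h$ distinct odd parts from $\{1,3,\dots,2j-1\}$ is the same datum as an $h$-element subset $\{b_1<\dots<b_h\}$ of $\{1,\dots,j\}$ with $b_1+\dots+b_h=(kj+h)/2$, and applying the staircase bijection $b_i\mapsto b_i-i$ this is in turn a partition of $(kj-h^2)/2$ that fits inside an $h\times(j-h)$ rectangle. Hence
\[
f_{O_j,h}(k)=\Bigl[z^{(kj-h^2)/2}\Bigr]\binom{j}{h}_{z},\qquad
\binom{j}{h}_{z}:=\prod_{i=1}^{h}\frac{1-z^{\,j-h+i}}{1-z^{\,i}},
\]
a polynomial in $z$ of degree $D:=h(j-h)$, where the bracket is read as $0$ whenever $(kj-h^2)/2$ is not an integer in $\{0,1,\dots,D\}$. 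A one-line parity check shows that $D$ is even and $1\le h\le j-1$ whenever $r_h$ is not identically zero, the only obstruction being the parity clash that makes $r_h\equiv 0$ when $j$ is even and $h$ is odd.

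Symmetry then drops out in two equivalent ways. Combinatorially, the complementation map $\{a_1,\dots,a_h\}\mapsto\{2j-a_1,\dots,2j-a_h\}$ is an involution carrying the partitions counted by $f_{O_j,h}(k)$ to those counted by $f_{O_j,h}(2h-k)$---the new parts are again distinct odd numbers in $\{1,\dots,2j-1\}$, now with sum $(2h-k)j$---so $f_{O_j,h}(k)=f_{O_j,h}(2h-k)$, which is exactly symmetry of $r_h$ about the $k=h$ term. On the $z$-side this is the reflection $[z^m]\binom{j}{h}_z=[z^{D-m}]\binom{j}{h}_z$ together with the identity $\tfrac{kj-h^2}{2}+\tfrac{(2h-k)j-h^2}{2}=D$; the same identity also exhibits the exponent attached to $k=h$ as $D/2$, the centre of $\binom{j}{h}_z$.

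For the mode and for weak unimodality I would invoke the classical theorem that the coefficients of $\binom{j}{h}_z$ are symmetric and unimodal (Sylvester's theorem; O'Hara later gave an elementary combinatorial proof). Its central coefficient is then the largest, so $f_{O_j,h}(h)\ge f_{O_j,h}(k)$ for every $k$ and $f_{O_j,h}(h)>0$; and since the exponent $(kj-h^2)/2$ is strictly increasing in $k$, the nonzero values of $f_{O_j,h}(k)$ form a subsequence of the unimodal coefficient sequence of $\binom{j}{h}_z$ and hence are themselves unimodal about $k=h$, at least in the weak sense.

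The genuinely delicate point---and the one I expect to be the main obstacle---is upgrading this to strict increase then strict decrease. Two values of $k$ that are consecutive among those with nonzero coefficient differ in exponent by $j/2$ when $j$ is even and by $j$ when $j$ is odd, hence by at least $2$ since $j>2$; so strictness amounts to showing that the coefficient sequence of $\binom{j}{h}_z$ contains, below its centre, no constant block long enough to span such a gap. I would establish this by combining the strict-unimodality estimates for Gaussian binomial coefficients (Pak and Panova), which bound exactly the lengths of these constant blocks, with an elementary treatment of the small-exponent range (where no rectangle constraint is active and one is merely comparing values of the ordinary partition function), and with a quick dismissal of the handful of pairs $(j,h)$---among them $h\in\{1,j-1\}$---for which $r_h$ has at most one nonzero term and the statement is vacuous.
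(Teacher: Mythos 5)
Your proposal is correct and rests on the same backbone as the paper's own argument: the staircase identification of $f_{O_j,h}(k)$ with the coefficient of $z^{(kj-h^2)/2}$ in $\binom{j}{h}_z$ (equivalently, the paper's generating function $q^{\binom{h+1}{2}}\binom{j}{h}_q$ in Lemma~\ref{unimodal}), the complementation involution $a\mapsto 2j-a$ giving $f_{O_j,h}(k)=f_{O_j,h}(2h-k)$ (Lemma~\ref{symmetric}), O'Hara's unimodality of the Gaussian coefficients, and the observation that the nonzero coefficients of $r_h(x)$ form a subsequence of that unimodal sequence. The genuine difference is how the degenerate case $f_{O_j,h}(h)=0$ is excluded: the paper does this by hand in Lemma~\ref{Sum_to_h} and the proof of Theorem~\ref{thm1}, constructing an explicit partition of $hj$ (a central block of $h$ consecutive numerators when $h\equiv j\pmod 2$, or the first and last $h/2$ numerators when $h$ is even and $j$ odd) and separately noting that $r_h\equiv 0$ when $h$ is odd and $j$ is even; you instead get $f_{O_j,h}(h)>0$ for free, since $k=h$ sits at the exact centre $D/2$ of a nonzero, symmetric, unimodal coefficient sequence, whose central entry is its (positive) maximum. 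Your route is cleaner and makes the explicit construction unnecessary, at the mild cost of the small bookkeeping that $D$ is even and $1\le h\le j-1$ whenever $r_h\not\equiv 0$. Finally, your last paragraph on strictness goes beyond the paper rather than falling short of it: the subsequence argument--which is all the paper uses--only delivers weak unimodality, even though the introduction defines ``unimodal'' with strict increase and decrease, so the point you flag is precisely what the paper leaves unaddressed; invoking Pak--Panova strict unimodality (with the gap between consecutive admissible exponents being $j/2$ or $j\ge 2$) is a plausible way to close it, though as you note it is only sketched and would import a distinctly non-elementary ingredient.
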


\begin{proposition}
\label{prop1}
The terms of $r(x)$ exhibit symmetry such that the coefficients follow $f_{O_j}(k_1)=f_{O_j}(j-k_1)$. Note that $k_1 \in \mathbb{N}, k_1<j$.
\end{proposition}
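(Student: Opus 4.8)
The plan is to read the symmetry off directly from the combinatorial interpretation $f_{O_j}(k)=q_{O_{2j-1}}(kj)$ recorded just above, together with the elementary identity $1+3+\cdots+(2j-1)=j^2$. First I would observe that a partition counted by $q_{O_{2j-1}}(kj)$ is nothing but a subset $S$ of the $j$-element set $O=\{1,3,5,\ldots,2j-1\}$ whose elements sum to $kj$: the requirements that the parts be distinct, odd, and at most $2j-1$ are precisely the single condition ``$S\subseteq O$''.

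Next I would introduce the complementation involution $\Phi\colon S\mapsto O\setminus S$ on subsets of $O$. It is its own inverse, hence a bijection, and since $\sum_{s\in O}s=j^2$ it carries a subset of sum $m$ to one of sum $j^2-m$. Restricting $\Phi$ to the subsets of sum $k_1 j$ therefore gives a bijection onto the subsets of sum $j^2-k_1 j=(j-k_1)j$. Because $0<k_1<j$ we also have $0<j-k_1<j$, so $(j-k_1)j$ is again of the form $kj$ with $k$ in the admissible range, and the quantity on the other side is genuinely a coefficient of $r(x)$. Hence
\[
f_{O_j}(k_1)=q_{O_{2j-1}}(k_1 j)=q_{O_{2j-1}}\bigl((j-k_1)j\bigr)=f_{O_j}(j-k_1).
\]

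An equivalent packaging of the same fact, for readers who prefer the generating-function viewpoint, is to note that $\prod_{n=1}^{j}(1+x^{2n-1})$ has degree $\sum_{n=1}^{j}(2n-1)=j^2$ and satisfies $x^{j^2}\prod_{n=1}^{j}(1+x^{-(2n-1)})=\prod_{n=1}^{j}(1+x^{2n-1})$, i.e.\ it is a palindromic (self-reciprocal) polynomial; consequently $q_{O_{2j-1}}(m)=q_{O_{2j-1}}(j^2-m)$ for every $m$, and one specializes $m=k_1 j$. I would probably give the combinatorial version in the text and mention the palindromic reformulation in a sentence, since it also foreshadows the unimodality discussion.

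There is essentially no hard step: the whole content is the remark that the part-set $O$ has total $j^2$, so ``take the complement'' is already the symmetry in question. The only point needing a word of care is the bookkeeping that $j-k_1$ stays strictly between $0$ and $j$ — which is exactly the hypothesis $k_1<j$ — guaranteeing that both $f_{O_j}(k_1)$ and $f_{O_j}(j-k_1)$ are coefficients actually appearing in $r(x)$ rather than degenerate or out-of-range values.
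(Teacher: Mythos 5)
Your proof is correct and rests on the same idea as the paper's: the paper argues that each combination summing to $k_1$ leaves behind, as its complement within the full series for $j$, a combination summing to $j-k_1$, which is exactly your complementation involution $S\mapsto O\setminus S$ with total sum $j^2$. Your write-up is simply a more formal packaging of that argument (plus an optional palindromic-polynomial remark), so no substantive difference.
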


We cannot make a similar statement of unimodality for $r(x)$, as the case of $j=6$ is bimodal. This corresponds to the fourth row in the table for $f_{O_j}(k)$. We also see that unlike $r_h(x)$, the greatest coefficient of $r(x)$ with a given $j$ can sometimes be found for more than one consecutive $k$-value. For example, the eighth row  of the table for $f_{O_j}(k)$ has three consecutive entries of 20, whereas with $r_h(x)$, the greatest coefficient is only for a single value of $k$, where $k=h$. These changes in behavior perhaps make our restrictions more interesting.  

We are able to produce a simple proof of unimodality when the number of terms is restricted to $h$, as this restriction enables a simple bijection to another problem, where one can ultimately show that unimodality is a consequence of the unimodality of the Gaussian binomial coefficients. This is not possible for $r(x)$, as the number of terms per partition is not restricted. We also note that $r(x)$ only appears to follow our conjecture below when $k$ is an integer. If $k$ is not required to be an integer, one can easily find instances where $r(x)$ is neither unimodal nor bimodal, such as when $j=11$. Considering this along with the fact that the unimodality of Gaussian binomial coefficients is difficult to prove, our following conjecture may prove to be a fairly difficult, interesting problem. 

\begin{conjecture}
$r(x)$ is always either unimodal or bimodal. 
\end{conjecture}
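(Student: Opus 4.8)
The plan is to reinterpret $f_{O_j,h}(k)$ as a single coefficient of a Gaussian binomial coefficient, read the symmetry off from the palindromicity of that polynomial, and deduce unimodality from the classical unimodality theorem for Gaussian binomial coefficients. First I would set up the bijection. Since $f_{O_j,h}(k)=q_{O_{2j-1},h}(kj)$, we are counting partitions of $kj$ into $h$ distinct odd parts, each at most $2j-1$. Writing each part as $2b_i-1$ with the $b_i$ distinct in $\{1,\dots,j\}$ turns such a partition into an $h$-element subset $\{b_1,\dots,b_h\}\subseteq\{1,\dots,j\}$ with $\sum_i(2b_i-1)=kj$, i.e.\ $\sum_i b_i=\tfrac{kj+h}{2}$; in particular $f_{O_j,h}(k)=0$ unless $kj\equiv h\pmod2$. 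Subtracting the staircase $h,h-1,\dots,1$ from the decreasingly sorted $b_i$ produces a partition fitting in an $h\times(j-h)$ box, of size $\tfrac{kj+h}{2}-\tfrac{h(h+1)}{2}=\tfrac{kj-h^2}{2}$, and this map is a bijection. Since the generating function counting partitions inside an $h\times(j-h)$ box by size is the Gaussian binomial coefficient $\binom{j}{h}_q$, we get
\[
 f_{O_j,h}(k)=\Big[q^{(kj-h^2)/2}\Big]\binom{j}{h}_q .
\]

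Next I would extract the symmetry. The polynomial $\binom{j}{h}_q$ is palindromic of degree $h(j-h)$, so $[q^m]\binom{j}{h}_q=[q^{h(j-h)-m}]\binom{j}{h}_q$, with largest coefficient(s) at the centre $h(j-h)/2$. Substituting $k$ and $2h-k$ in the displayed identity gives exponents summing to $h(j-h)$, hence $f_{O_j,h}(k)=f_{O_j,h}(2h-k)$, which is exactly the asserted symmetry of $r_h(x)$ about the term $x^h$; and $k=h$ yields the central exponent $\tfrac{h(j-h)}{2}$, so the peak sits at $k=h$ with value $f_{O_j,h}(h)$. For unimodality, I would invoke the classical theorem (Sylvester; or Proctor's $\mathfrak{sl}_2$ argument) that the coefficient sequence of $\binom{j}{h}_q$ is unimodal. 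As $k$ ranges over the values with $f_{O_j,h}(k)\neq 0$, the exponents $\tfrac{kj-h^2}{2}$ sweep out an arithmetic progression of common difference $j/2$ (when $j$ is even, which forces $h$ even) or $j$ (when $j$ is odd, which forces $k\equiv h\pmod2$), and by the symmetry just established this progression is symmetric about the centre of $\binom{j}{h}_q$. A symmetric arithmetic subsequence of a symmetric unimodal sequence is again symmetric and weakly unimodal, which already gives the statement up to the strictness in our definition.

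The hard part will be the strict inequalities. Two consecutive sampled coefficients on the same side of the centre can coincide only if $\binom{j}{h}_q$ is constant across a run of more than $j/2$ consecutive exponents, so what is really needed is a bound of roughly $j/2$ on the length of a plateau in the coefficient sequence of $\binom{j}{h}_q$ for $2\le h\le j-2$ (the cases $h\le1$, $h=j-1$, $h=j$ being trivial or vacuous here). Since even plain unimodality of Gaussian binomial coefficients is non-elementary, this quantitative refinement is the genuine obstacle; I would attack it through the description of the differences $[q^m]\binom{j}{h}_q-[q^{m-1}]\binom{j}{h}_q$ afforded by the $\mathfrak{sl}_2$-action, or by invoking known strict-unimodality results for $q$-binomial coefficients, and then disposing of the finitely many small $j$ outside the reach of an asymptotic statement by direct computation.
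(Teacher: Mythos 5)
The statement you were asked to prove is the paper's \emph{conjecture} about $r(x)$, the generating function for $f_{O_j}(k)$ with \emph{no} restriction on the number of parts, namely that its coefficient sequence is always unimodal or bimodal. Your entire argument is about $r_h(x)$: the bijection (subtract the staircase, land in an $h\times(j-h)$ box), the identity $f_{O_j,h}(k)=[q^{(kj-h^2)/2}]\binom{j}{h}_q$, the palindromicity giving symmetry about $k=h$, and the sampling of a symmetric arithmetic progression of exponents from a unimodal sequence — all of this is, in substance, the paper's own proof of Theorem \ref{thm1} and Lemma \ref{symmetric} (the paper cites exactly this generating function $q^{\binom{h+1}{2}}\binom{j}{h}_q$ and O'Hara's unimodality result). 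So what you have written re-derives a result the paper already proves, and does not touch the statement at hand.

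The gap is not a fixable detail but a mismatch of objects: $f_{O_j}(k)=\sum_{h}f_{O_j,h}(k)$, a sum of layers each of which is symmetric and unimodal about a \emph{different} center $k=h$. Unimodality of each layer says nothing about the shape of the sum; sums of unimodal sequences with shifted peaks can have many modes in general, and indeed the paper notes that $r(x)$ is genuinely bimodal for $j=6$ and fails to be unimodal or bimodal if one drops the integrality requirement on $k$ (e.g.\ $j=11$), which shows that any proof must exploit the specific way the peaks at $k=1,2,\dots$ interleave when sampled at multiples of $j$. Your plateau/strictness discussion for $\binom{j}{h}_q$ is likewise aimed at the restricted problem and does not bear on this. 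Note also that the paper offers no proof of this conjecture — it is stated as an open problem — so a correct argument here would have to be new mathematics controlling the superposition of the $h$-layers, which your proposal does not attempt.
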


In contrast to the difficulties faced when dealing with $q_s(x)$, our proof of the unimodality of nonzero terms of $r_h(x)$ is fairly elementary, at least in the sense that it requires only elementary methods and previous results that have combinatorial proofs. We prove this in Section \ref{sec:5}. We also explore simplifications of $r_h(x)$ in Section \ref{sec:2}. In Section \ref{sec:3} we explore closed-form expressions for the case of $h=2$, which we prove in Section \ref{sec:7}. In Section \ref{sec:8} we provide examples of number tables beyond the case of restriction to two terms, and in Section \ref{sec:9} we include further computational results. In Section \ref{sec:10} we connect the odd case to finding solutions to a certain nonlinear Diophantine equation.

After examining such fractions with odd numerators, it is natural to investigate a similar problem with even numerators. 

\begin{equation}
    t=\frac{2}{y} + \frac{4}{y} + \cdots + \frac{2(x-1)}{y}+\frac{2x}{y}
\end{equation}
We observe that if one allows the denominators to be values other than $t$, and the length of the series to vary, then there are often multiple series for each $t$. We also note that in some cases, though not all, these series can be partitioned into all natural numbers $k$, where $0<k<t$. Examples of such partitions are included in Section \ref{sec:9}. We include below a few examples of series for $t$.

\newpage

\begin{displaymath}
6=\frac{2}{2}+\frac{4}{2}+\frac{6}{2}
\end{displaymath}
\begin{displaymath}
10=\frac{2}{2}+\frac{4}{2}+\frac{6}{2}+\frac{8}{2}
\end{displaymath}
\begin{displaymath}
10=\frac{2}{3}+\frac{4}{3}+\frac{6}{3}+\frac{8}{3}+\frac{10}{3}
\end{displaymath}
\begin{displaymath}
14=\frac{2}{4}+\frac{4}{4}+\frac{6}{4}+\frac{8}{4}+\frac{10}{4}+\frac{12}{4}+\frac{14}{4}
\end{displaymath}
This case leads to a nice expression for the count of solutions, as well as a connection to the prime omega function. We provide proofs for the following results in Section \ref{sec:6}.

\begin{theorem}
\label{thm2}
Let $F_E(t)$ be the number of partitions of the positive integer $t$ into fractions with the first $x$ consecutive even integers as numerators and equal denominators of $y$, such that $x$ and $y$ are positive integers, and $0<y<x<t$. We then have $F_E(t)=2^{\omega(t)}-2$, where $\omega(t)$ is the number of distinct prime factors of $t$.
\end{theorem}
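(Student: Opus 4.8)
The plan is to turn the statement into an elementary divisor count. Summing the numerators gives $2+4+\cdots+2x=x(x+1)$, so a partition of $t$ of the prescribed shape is exactly a pair of positive integers $(x,y)$ with
\[
ty=x(x+1),\qquad 0<y<x<t,
\]
and distinct pairs give distinct partitions, so $F_E(t)$ is the number of such pairs. Since $y$ is forced to equal $x(x+1)/t$, the task reduces to counting the integers $x$ with $1\le x<t$, $t\mid x(x+1)$, and $x(x+1)/t<x$.

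The engine is that $\gcd(x,x+1)=1$. For every prime $p\mid t$, if $p^{e}$ is the exact power of $p$ dividing $t$, then $t\mid x(x+1)$ forces $p^{e}\mid x$ or $p^{e}\mid x+1$, and not both. Collecting the prime powers of $t$ according to which of $x$ and $x+1$ they divide produces an ordered factorization $t=ab$ with $\gcd(a,b)=1$, $a\mid x$, $b\mid x+1$. Conversely, given any ordered coprime factorization $t=ab$, the Chinese Remainder Theorem supplies a unique $x_{0}\in\{0,1,\dots,t-1\}$ with $x_{0}\equiv 0\pmod a$ and $x_{0}\equiv -1\pmod b$, and $x_{0}$ is then the only candidate in $[1,t)$ compatible with that factorization, paired with $y=x_{0}(x_{0}+1)/t$ (which is an integer since $a\mid x_0$, $b\mid x_0+1$, and $\gcd(a,b)=1$). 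I would package this as an explicit bijection between valid pairs $(x,y)$ and ordered coprime factorizations of $t$, checking injectivity and surjectivity in both directions.

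It then remains to count the factorizations and discard the degenerate ones. There are exactly $2^{\omega(t)}$ ordered coprime factorizations $t=ab$, one for each way of distributing the $\omega(t)$ distinct prime divisors of $t$ between $a$ and $b$. The two degenerate choices are precisely those failing the inequalities: if $a=t$ then $t\mid x$ forces $x\ge t$, contradicting $x<t$; if $a=1$, so $b=t$, then $t\mid x+1$ forces $x=t-1$ and hence $y=(t-1)t/t=t-1=x$, contradicting $y<x$. For every other factorization one has $1<a<t$ and $1<b<t$, and then $x_{0}\ge a\ge 2$ while $x_{0}\ne t-1$ (otherwise $a\mid t-1$ and $a\mid t$ would give $a=1$), so $x_{0}\le t-2$; this yields $1\le y=x_{0}(x_{0}+1)/t\le x_{0}(t-1)/t<x_{0}=x<t$, so all of $0<y<x<t$ hold. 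Hence valid pairs correspond bijectively to the $2^{\omega(t)}-2$ nondegenerate factorizations, and $F_E(t)=2^{\omega(t)}-2$ (for $t\ge 2$; for $t=1$ there are trivially no such partitions).

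The one genuinely delicate point — the step I would take most care over — is the boundary bookkeeping: verifying that the factorizations $(t,1)$ and $(1,t)$ are exactly the ones ruled out by the strict inequalities $0<y<x<t$, and that no nondegenerate factorization is lost or counted twice. Everything else is the routine device of splitting a divisibility condition across coprime factors, which works cleanly here precisely because $x$ and $x+1$ are coprime.
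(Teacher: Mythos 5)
Your proposal is correct, and while it runs on the same engine as the paper --- the coprimality of $x$ and $x+1$ together with the Chinese Remainder Theorem, giving $2^{\omega(t)}$ candidates of which exactly two are excluded by the inequalities --- it is organized differently. The paper first relaxes the constraints to $0<x\le t$, $0<y\le t+1$ (proving this adds exactly the two trivial solutions $x=t$ and $x=t-1$), then counts roots of $x^2+x\equiv 0 \pmod{t}$ multiplicatively over the prime powers $p^n\,\Vert\, t$, invoking Hensel's lemma to get exactly two roots modulo each $p^n$, and finally subtracts $2$. You instead bypass the local lifting argument entirely: since $p$ divides at most one of $x$, $x+1$, the full power $p^e$ must divide that one, which packages the solution set directly as a bijection with ordered coprime factorizations $t=ab$ (equivalently, unitary divisors of $t$), and you verify the inequalities $0<y<x<t$ head-on, identifying $(a,b)=(t,1)$ and $(1,t)$ as precisely the two discarded factorizations. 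What your route buys is self-containedness (no Hensel, no constraint-relaxation bookkeeping) and a cleaner treatment of the boundary, which is the part the paper handles somewhat informally in its first lemma; what the paper's route buys is a formulation ($R(t)=2^{\omega(t)}$ roots of $\Psi_t$ under the relaxed constraints) that it reuses later for Corollaries \ref{cor2} and \ref{cor3}. One shared caveat: the identity fails at $t=1$ (where $F_E(1)=0\neq 2^{\omega(1)}-2$), which you at least flag explicitly, so your restriction to $t\ge 2$ is the honest reading of the statement.
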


We find that we can use our partition identity to produce a series identity for $\frac{\zeta^2(s)}{\zeta(2s)}$ and a continuation of $\omega(t)$ to non-integers and complex numbers $z$. This is not analytic everywhere, though it is analytic in many places, such as $z \in \mathbb{R}, n<z<n+1, n \in \mathbb{N}.$ This can be used to \enquote{assign} a quantity of distinct prime factors to numbers that obviously have none. For example, we can define $\omega(e) \approx -6.0963 + 4.5323i$. Note that we use the normalized $\sinc$ function. This result is of course valid for natural numbers as well, so it can be considered a general formula for the prime omega function. 

\begin{corollary}
\label{cor2}
\begin{equation}
    \omega(z)=\log_2 \bigg( \sum_{x=1}^{\left \lceil{\Re(z)}\right \rceil } \sinc \bigg( \displaystyle\prod_{y=1}^{\left \lceil{\Re(z)}\right \rceil+1}(x^2+x-yz) \bigg) \bigg)
\end{equation}
\end{corollary}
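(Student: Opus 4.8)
The plan is to evaluate the right-hand side at a positive integer argument $z=t$, where it collapses to a count of solutions of the Diophantine equation $x^{2}+x=yt$ already analysed in Theorem~\ref{thm2}, and then to note that the same expression is meaningful verbatim for complex $z$. Concretely, a partition of $t$ of the prescribed shape is the same datum as a pair of positive integers $(x,y)$ with $x(x+1)=ty$, i.e.\ $x^{2}+x-yt=0$. Fix an integer $t$ and an $x\in\{1,\dots,t\}$ and put $P_{x}:=\prod_{y=1}^{t+1}(x^{2}+x-yt)\in\mathbb{Z}$. A factor $x^{2}+x-yt$ is zero exactly when $y=x(x+1)/t$, and this $y$ lies in $\{1,\dots,t+1\}$ precisely when $t\mid x(x+1)$, since $t\mid x(x+1)$ with $1\le x\le t$ forces $t\le x(x+1)\le t(t+1)$. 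Hence $P_{x}=0$ if $t\mid x(x+1)$ and $P_{x}$ is a nonzero integer otherwise. Because the normalized sinc function satisfies $\sinc(0)=1$ and $\sinc(k)=0$ for every nonzero integer $k$, the inner summand becomes an indicator of divisibility and
\[
\sum_{x=1}^{t}\sinc(P_{x})=\#\{\,x\in\{1,\dots,t\}:\ t\mid x(x+1)\,\}.
\]

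The next step is to identify this count with $2^{\omega(t)}$. The residues $x$ with $t\mid x(x+1)$ and $1\le x\le t-2$ are exactly the partitions counted by $F_{E}(t)$ (for such $x$ one has $y=x(x+1)/t<x<t$), whereas the two remaining values $x=t-1$ (with $y=t-1$) and $x=t$ (with $y=t+1$) are always present, are distinct, and are precisely the \enquote{trivial} solutions excluded by the hypothesis $0<y<x<t$ of Theorem~\ref{thm2}. Therefore the sum equals $F_{E}(t)+2=(2^{\omega(t)}-2)+2=2^{\omega(t)}$; alternatively one counts $\{\,x\bmod t:\ t\mid x(x+1)\,\}$ directly, noting that $\gcd(x,x+1)=1$ forces each prime-power divisor of $t$ to divide exactly one of $x$ and $x+1$, so the Chinese Remainder Theorem yields $2^{\omega(t)}$ residue classes. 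Taking $\log_{2}$, and using $\lceil\Re(z)\rceil=t$ at $z=t$, gives Corollary~\ref{cor2} for every $t\in\mathbb{N}$.

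For the extension to arbitrary $z$, observe that the normalized sinc is entire and that for each fixed $m\in\mathbb{Z}$ the quantity $\prod_{y=1}^{m+1}(x^{2}+x-yz)$ is a polynomial in $z$, so $G_{m}(z):=\sum_{x=1}^{m}\sinc\!\left(\prod_{y=1}^{m+1}(x^{2}+x-yz)\right)$ is an entire function of $z$. On the region where $\lceil\Re(z)\rceil=m$ the right-hand side of Corollary~\ref{cor2} is just $\log_{2}G_{m}(z)$, hence it is analytic wherever $G_{m}$ is nonvanishing and a branch of the logarithm can be fixed consistently --- in particular on each real interval $(n,n+1)$, where $m=n+1$ is constant --- while the sample value $\omega(e)=\log_{2}G_{3}(e)$ is obtained from the principal branch (here $G_{3}(e)$ is a small negative real). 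The only obstructions to analyticity are the jumps of $\lceil\Re(z)\rceil$ across the lines $\Re(z)\in\mathbb{Z}$, which splice together different entire functions $G_{m}$, together with possible zeros of $G_{m}$; this is the stated failure of global analyticity. I expect the only delicate point to be the bookkeeping of the middle paragraph --- checking that the cutoff $1\le y\le t+1$ in $P_{x}$ discards no genuine solution and that exactly the two trivial solutions $x=t-1,t$ account for the gap between the unrestricted residue count $2^{\omega(t)}$ and $F_{E}(t)$ --- since the value $2^{\omega(t)}$ is already furnished by the proof of Theorem~\ref{thm2} and the rest follows immediately from the two special values $\sinc(0)=1$ and $\sinc(\text{nonzero integer})=0$.
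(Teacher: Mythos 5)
Your argument is correct and follows essentially the same route as the paper: both reduce the sum to counting the $x\in\{1,\dots,t\}$ with $t\mid x(x+1)$ via the fact that $\sinc$ is an indicator of the vanishing of the integer product $\prod_{y=1}^{t+1}(x^2+x-yt)$, identify that count as $F_E(t)+2=2^{\omega(t)}$ using Theorem \ref{thm2} and the two trivial solutions $x=t-1,\,t$, and then pass to general $z$ by inserting $\lceil\Re(z)\rceil$. The differences are cosmetic: you bypass the paper's intermediate contour-integral formulation of the indicator (which it immediately rewrites as $\sinc$ anyway), add an alternative direct Chinese Remainder Theorem count of the residues, and are somewhat more explicit about the $y$-range bookkeeping and the branch and analyticity issues.
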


\begin{corollary}
\label{cor3}
Let $s$ be a real number greater than two, then
\begin{equation}
\sum_{t=1}^{\infty} \sum_{x=1}^{t} \frac{\sinc \bigg( \frac{(-t)^{t+1}\Gamma(\frac{-x^2}{t}-\frac{x}{t}+t+2)}{\Gamma(\frac{-x^2}{t}-\frac{x}{t}+1)} \bigg)}{t^s}  = \frac{\zeta^2(s)}{\zeta(2s)}
\end{equation}
\end{corollary}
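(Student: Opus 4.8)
The plan is to recognise the double series as the Dirichlet series $\sum_{t\ge1}2^{\omega(t)}t^{-s}$ and then evaluate it via an Euler product. The first step is to unwind the argument of $\sinc$. Writing $a=-\tfrac{x^{2}+x}{t}=-\tfrac{x(x+1)}{t}$ and using the rising--factorial identity $\Gamma(a+t+2)/\Gamma(a+1)=\prod_{j=1}^{t+1}(a+j)$ --- which is an entire function of $a$, so the ratio is read off from this polynomial even at the poles of $\Gamma(a+1)$ --- the argument becomes $(-t)^{t+1}\prod_{j=1}^{t+1}\big(j-\tfrac{x(x+1)}{t}\big)=\prod_{j=1}^{t+1}\big(x(x+1)-jt\big)=:P(x,t)$. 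This is exactly the $z=t$ specialisation of the $\sinc$-argument in Corollary \ref{cor2}, and, crucially, $P(x,t)$ is an integer.

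Since we use the normalised $\sinc$, the value $\sinc(P(x,t))$ equals $1$ when $P(x,t)=0$ and $0$ for every other integer value, and $P(x,t)=0$ if and only if $x(x+1)=jt$ for some $j\in\{1,\dots,t+1\}$. For $1\le x\le t$ one has $2\le x(x+1)\le t(t+1)$, so $t\mid x(x+1)$ already forces the quotient $x(x+1)/t$ into $\{1,\dots,t+1\}$ --- this is precisely why the product (equivalently the Gamma-ratio) has length $t+1=\lceil\Re(t)\rceil+1$. Hence $\sum_{x=1}^{t}\sinc(P(x,t))=\#\{x\in\{1,\dots,t\}:t\mid x(x+1)\}$. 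Now $x$ runs over a complete residue system mod $t$, and since $\gcd(x,x+1)=1$ each prime power $p^{e}\parallel t$ contributes exactly the two solutions $x\equiv0,-1\pmod{p^{e}}$, so by the Chinese Remainder Theorem this count is $2^{\omega(t)}$ for every $t\ge1$. (This is the same count behind Theorem \ref{thm2}: the residues $x=t$ and $x=t-1$ are the two that the requirement $0<y<x<t$ discards there, leaving $F_E(t)=2^{\omega(t)}-2$.)

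Therefore the double sum collapses to $\sum_{t=1}^{\infty}2^{\omega(t)}t^{-s}$; this rearrangement is legitimate because the series converges absolutely for $s>2$, e.g.\ via $2^{\omega(t)}\le t$ so that $\sum_{t}2^{\omega(t)}t^{-s}\le\zeta(s-1)$. Finally $2^{\omega(\cdot)}$ is multiplicative with $2^{\omega(p^{e})}=2$ for all $e\ge1$, hence for real $s>2$ one has the Euler product $\sum_{t\ge1}2^{\omega(t)}t^{-s}=\prod_{p}\big(1+2\sum_{e\ge1}p^{-es}\big)=\prod_{p}\tfrac{1+p^{-s}}{1-p^{-s}}=\prod_{p}\tfrac{1-p^{-2s}}{(1-p^{-s})^{2}}=\tfrac{\zeta^{2}(s)}{\zeta(2s)}$, which is the asserted identity. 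I expect the only delicate point to be the first step --- verifying that the Gamma-ratio is genuinely integer-valued and vanishes in exactly the right cases, so that $\sinc$ acts as the indicator of $t\mid x(x+1)$; after that one only needs the Chinese Remainder Theorem count already used for Theorem \ref{thm2} together with a standard Euler-product computation.
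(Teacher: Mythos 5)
Your proof is correct and follows essentially the same route as the paper: rewrite the Gamma-ratio as the product $\prod_{y=1}^{t+1}(x^2+x-yt)$, observe that the normalized $\sinc$ then acts as the indicator of $x^2+x\equiv 0 \pmod t$, so the inner sum equals $2^{\omega(t)}$, and finish with the Dirichlet series for $2^{\omega}$. The differences are matters of self-containment rather than strategy: where the paper invokes its Theorem \ref{thm2} (via the root count $R(t)=2^{\omega(t)}$, itself proved with Hensel's lemma and the Chinese Remainder Theorem) and cites the identity $\sum_{n\ge1}2^{\omega(n)}n^{-s}=\zeta^2(s)/\zeta(2s)$ from the literature, you re-derive the count directly from coprimality of $x$ and $x+1$ plus CRT, and you prove the Dirichlet identity by the standard Euler product $\prod_p\frac{1+p^{-s}}{1-p^{-s}}$. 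You are also more careful than the paper on two small points: checking that $1\le x\le t$ forces the quotient $x(x+1)/t$ into $\{1,\dots,t+1\}$, so the product detects exactly the divisibility condition, and interpreting the ratio $\Gamma(a+t+2)/\Gamma(a+1)$ through its polynomial extension at the poles of $\Gamma(a+1)$, which is precisely where the $\sinc$ argument vanishes. Your convergence bound $2^{\omega(t)}\le t$ matches the paper's comparison with $\sum t^{1-s}$, so the rearrangement for $s>2$ is justified in both treatments.
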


This is done in part by using a modification of the concept of the circle method. These final corollaries are largely unimportant but interesting curiosities, though the concept of a continuation of a function beyond its usual domain is quite important in number theory. Examples include the analytic continuation of the Riemann zeta function \cite{six} and the gamma function for the factorial.  

\section{Simplifying \texorpdfstring{$r_h(x)$}{}}
\label{sec:2}
We know we can write $r_h(x)$ as the following, where we are only interested in values of $m$ that are integral multiples of $j$, as $m=kj$ when $f_{O_j,h}(k)=q_{O_{2j-1},h}(m)$, where $0<k<j$.
\begin{equation}
    \sum_{m=0}^{j^2} \sum_{h=0}^{j} q_{O_{2j-1},h}(m) y^hx^m=\displaystyle\prod_{n=1}^{j} (1+yx^{2n-1})
\end{equation}

We point out that if we remove from the sum all cases where $q_{O_{2j-1},h}(m)=0$, we see that we can use a relation $R$ to condense the double sum into a single sum. This relation utilizes the Rascal triangle, an alternative to Pascal's triangle that was published in 2010. We let the $n^{th}$ entry, counted from left to right on the $j^{th}$ row be denoted by $T(j,n)$. The values of this triangle are produced using the formula $T(j,n)=\frac{(T(j-1,n-1))(T(j-1,n))+1}{T(j-2,n-1)}$, where the first and last terms of each row are one \cite{seven}. The first few rows are

\begin{center}
1\\
1, 1\\
1, 2, 1\\
1, 3, 3, 1\\
1, 4, 5, 4, 1\\
1, 5, 7, 7, 5, 1\\
1, 6, 9, 10, 9, 6, 1\\
\end{center}
As an example for the recurrence above, we see that for the entry $10$ in the bottom row, we have $10=\frac{(7)(7)+1}{5}$. We see that we can write the generating function
\begin{equation}
     \sum_{0 \leq m \leq j^2}^{}  q_{O_{2j-1},\lambda}(m)y^{\lambda}x^m=\displaystyle\prod_{n=1}^{j} (1+yx^{2n-1})
\end{equation}

When $q_{O_{2j-1},\lambda}(m)$ is nonzero, $R$ is a relation that takes $m$ as its input and returns each integer $\lambda$ from $0$ to $j$ a number of times given by the entries read from left to right on the $(j+1)^{th}$ row of the Rascal triangle. This is then equivalent to saying that the entries in the Rascal triangle give the number of distinct values one can find as sums of a given number of integers removed from the set of the first $j$ odd integers. The ability of the Rascal triangle to predict the number of distinct restricted sums taken from consecutive integers has been noticed before, as indicated by a comment on the OEIS page \cite{eight}, though now we utilize this property to condense a generating function. The $(j+1)^{th}$ row is needed because we have exponents of $y$ from $0$ to $j$.

We see that $R$ does not always return its values in consecutive order and that it can return multiple values for each $m$. $\lambda$ is even if $m$ is even, and odd if $m$ is odd. This is because the sum of an even number of odd integers is even, and the sum of an odd number of odd integers is odd. The index $m$ can range from 0 to $j^2$, though it does not take on every value. We define $\lambda=0$ when $m=0$ and $q_{O_{2j-1},\lambda}(m)=1$ when $m=0$ and $\lambda=0$.

As an example, we display the case of $j=6$:
\begin{equation}
    \prod_{n=1}^{6} (1+yx^{2n-1}) = 
    x^{36}y^6+x^{35}y^5+x^{33}y^5+x^{32}y^4+x^{31}y^5+x^{30}y^4+x^{29}y^5+2x^{28}y^4
\end{equation}
\begin{equation*}
    +x^{27}y^5+x^{27}y^3+2x^{26}y^4+x^{25}y^5+x^{25}y^3+3x^{24}y^4+2x^{23}y^3+2x^{22}y^4+3x^{21}y^3+2x^{20}y^4
\end{equation*}
\begin{equation*}
    +x^{20}y^2+3x^{19}y^3+x^{18}y^4+x^{18}y^2+3x^{17}y^3+x^{16}y^4+2x^{16}y^2+3x^{15}y^3+2x^{14}y^2+2x^{13}y^3
\end{equation*}
\begin{equation*}
    +3x^{12}y^2+x^{11}y^3+x^{11}y+2x^{10}y^2+x^9y^3+x^9y+2x^8y^2+x^7y+x^6y^2+x^5y+x^4y^2+x^3y+xy+1
\end{equation*}

We observe that $y^0$ appears one time, $y^1$ six times, $y^2$ nine times, $y^3$ ten times, $y^4$ nine times, $y^5$ six times, and $y^6$ one time. This matches the seventh row of the Rascal triangle. We see that some values for $m$, such as 34 and 2, do not appear. We also see that the relation returns multiple values for some $m$, as $m=27$ returns both 5 and 3 as exponents of $y$. When the exponent of $x$ is $kj$, the coefficients of the polynomial in this example match the entries in the tables for $f_{O_j,h}(k)$ with $j=6$, where the exponent of $y$ is the number of terms per partition, $h$. 

\section{Closed form expressions for the case of \texorpdfstring{$h=2$}{}}
\label{sec:3}

While all tables exhibit symmetry, only certain cases can be entirely described by readily apparent closed-form expressions. There is no obvious closed-form expression for the coefficients of $r(x)$, though we can find closed-form expressions for the coefficients of $r_h(x)$ when $h=2$. Note that $n=j$. This notation is used to improve readability, as $j$ is too similar to $i$, which we use to denote the imaginary unit. 

\noindent For $k = 1$ and $k=3:$
\begin{equation}
f_{O_j,2}(k)=\frac{1}{8}\bigl((-1)^{1+n} - 1 + (-i)^n + i^n + (-1)^n n + n\bigr)
\end{equation}
For $k=2$:

\noindent We can write the following recurrence relation for this sequence \cite{nine}.
\begin{equation}
    f_{O_j,2}(k)=1+f_{O_{j-2},2}(k)
\end{equation}
\begin{displaymath}
\text{where} \  f_{O_3,2}(k)=1 \ \text{and} \ f_{O_4,2}(k)=2
\end{displaymath}
Solving the recurrence relation, we find
\begin{equation}
    f_{O_j,2}(k)=\frac{1}{4}\bigl((-1)^n + 2n - 1 \bigr)
\end{equation}
For $k>3:$
\begin{equation}
    f_{O_j,2}(k)=0
\end{equation}
We provide a proof for this special case in Section \ref{sec:7}. 

\section{Proof that there exists a solution for all \texorpdfstring{$k$}{}, where \texorpdfstring{$k<j$}{} and \texorpdfstring{$j>2$}{}}
\label{sec:4}

\begin{proposition}
\label{prp:4.1}
Any positive integer $j$ can be written as the sum of a set of fractions, where the numerators are the first $j$ consecutive odd integers and the denominators are $j$. If $j>2$, any positive integer $k$, where $k<j$, can be written as the sum of some combination of the fractions that are summed to produce $j$. 
\end{proposition}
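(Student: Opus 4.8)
The first assertion is immediate from the identity $j^2 = \sum_{n=1}^{j}(2n-1)$ divided through by $j$, so the real content is the second claim: that every $k$ with $0 < k < j$ (and $j > 2$) is a subsum of $\{1/j, 3/j, \dots, (2j-1)/j\}$. Multiplying by $j$, this is equivalent to showing that the integer $kj$ is a sum of distinct elements of $S_j := \{1, 3, \dots, 2j-1\}$. The plan is to prove the stronger statement: \emph{every} integer $N$ with $1 \le N \le j^2$ that has the correct parity to be a sum of distinct odd numbers from $S_j$ is in fact such a sum --- more precisely, I will show that the set of subset-sums of $S_j$ consists of $0$ together with every integer in a suitable ``staircase'' range, and then check $kj$ falls in that range. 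A cleaner route: induct on $j$, peeling off the largest numerator $2j-1$.

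First I would set up the induction. For the base case take $j = 3$: $S_3 = \{1,3,5\}$, and we need $k=1$ ($=1$, i.e. $\tfrac16+\tfrac56$ after scaling by... wait, scaling: $kj = 3$, and $3 = 3$ or $3 = 1 + \dots$; here $3 \in S_3$) and $k=2$ ($kj = 6 = 1 + 5$), both of which work. For the inductive step, assume the statement holds for $j-1$, i.e. every integer of the form $k'(j-1)$ with $0 < k' < j-1$ is a sum of distinct elements of $S_{j-1} = \{1,3,\dots,2j-3\}$. Given $k$ with $0 < k < j$, I want to represent $kj$. The natural idea is: if $kj \le$ (the max subset-sum of $S_{j-1}$) $= (j-1)^2$ and $kj$ has a representation reachable within $S_{j-1}$, use it; otherwise include $2j-1$ and represent $kj - (2j-1)$ using $S_{j-1}$. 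The arithmetic to control is that $kj - (2j-1) = (k-2)j + 1$, which is not a multiple of $j-1$, so a direct appeal to the inductive hypothesis does not apply — this is the main obstacle.

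To get around that, I would instead prove the right auxiliary lemma by induction on $j$: \textbf{for $j \ge 1$, the set of subset-sums of $S_j = \{1,3,\dots,2j-1\}$ is exactly $\{0\} \cup \{m \in \mathbb{Z} : 1 \le m \le j^2,\ m \ne j^2 - 1,\ m \ne 1\text{ excluded only for small }j\}$} --- i.e., essentially every integer from $1$ to $j^2$ is attainable, with at most a small number of ``gap'' exceptions near the top and bottom (indeed $1$ is attainable, $2$ is not, $j^2$ and $j^2-1$... $j^2 - 1$ requires omitting a subset summing to $1$, i.e. omitting just the element $1$, so $j^2-1$ \emph{is} attainable; $j^2 - 2$ needs a subset summing to $2$, impossible). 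A careful statement is: the attainable sums are $0$, and all integers in $[1, j^2]$ except $2$ and $j^2 - 2$ (for $j$ large enough that these are distinct and in range). The inductive step adds the element $2j-1$ to $S_{j-1}$: attainable sums of $S_j$ are (attainable sums of $S_{j-1}$) $\cup$ ($(2j-1)$ + attainable sums of $S_{j-1}$); since attainable sums of $S_{j-1}$ cover $[1,(j-1)^2]$ minus two small exceptions, and $(j-1)^2 \ge 2j-1$ once $j \ge 4$ (wait, $(j-1)^2 \ge 2j-1 \iff j^2 - 4j + 2 \ge 0 \iff j \ge 4$), the two translated copies overlap and their union covers $[1, j^2]$ minus the exceptions. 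The small-$j$ cases ($j = 1,2,3$) are checked by hand. Finally, since $j > 2$ and $0 < k < j$ give $j \le kj \le (j-1)j = j^2 - j < j^2 - 2$, and $kj \ge j \ge 3 > 2$, the value $kj$ avoids both exceptional gaps, so $kj$ is an attainable subset-sum, completing the proof. I expect verifying the overlap inequality and pinning down exactly which small-$j$ base cases need separate treatment to be the only delicate bookkeeping.
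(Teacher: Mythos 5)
Your proposal is correct, but it takes a genuinely different route from the paper. The paper argues directly and constructively: pairs of fractions equidistant from the two ends of the series always sum to $2$; when $j$ is odd the middle term equals $1$, and when $j$ is even the first term together with the term at $n=j/2$ sums to $1$; even $k$ are then assembled from pairs summing to $2$, and odd $k$ by adjoining the term (or pair) summing to $1$ --- for even $j$ by subtracting such pairs from the full series --- with a short case analysis on the parity of $j$ and no induction. You instead prove a stronger structural lemma by induction on $j$: the subset sums of $\{1,3,\dots,2j-1\}$ are exactly $0$ together with every integer in $[1,j^2]$ except $2$ and $j^2-2$ (for $j\ge 3$), and then observe that $3\le j\le kj\le j^2-j<j^2-2$ places $kj$ in the admissible range. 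Your lemma is true, and the inductive step works as you sketch: for $j\ge 4$ the two translated copies overlap since $(j-1)^2\ge 2j-1$, and each copy's exceptional gaps ($2$ and $(j-1)^2-2$ in one, $2j+1$ and $j^2-2$ in the other) are absorbed by the other copy, leaving only $2$ and $j^2-2$; the borderline case $j=4$, where $(j-1)^2-2=2j-1$, is still covered. The deferred bookkeeping therefore does check out, and the only thing needed for completeness is that explicit gap-coverage verification. What your route buys is more information --- a characterization of \emph{all} attainable sums, not just the multiples of $j$, which also illuminates the paper's remark about non-integral $k$ --- while the paper's route buys explicit combinations for each $k$ and an argument that avoids induction entirely.
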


\begin{proof}
We see that pairs of fractions equidistant from opposite ends of the series for $j$ always add to two. Such pairs can be written in the form shown below, where $m=2n-1$:
\begin{equation}
    \frac{m}{j}+\frac{2j-m}{j}=\frac{2j}{j}=2
\end{equation}

\subsection{Odd value for \texorpdfstring{$j$}{}:}
If $j$ is odd, then the series $\sum_{n=1}^{j} \frac{2n-1}{j}$ must have an odd number of terms. The middle term of the series, given by $n=\bigl(\frac{1}{2} (j-1) +1\bigr)$, is always equal to one. 
\begin {equation}
\frac{2\bigl(\frac{(j-1)}{2}+1\bigr)-1}{j}=\frac{j}{j}=1
\end {equation}

Any pair of terms taken from opposite ends of the series $\sum_{n=1}^{j} \frac{2n-1}{j}$ and moving inwards sum to two, with the middle term of the series equaling one. Any even positive integer $k$, where $k<j$, can then be written as some multiple of pairs of fractions that add to two. (For instance, $k=6$ would require three pairs whose terms are equidistant from the ends of the series.) If $k$ is odd, the term from $\sum_{n=1}^{j} \frac{2n-1}{j}$ that equals one would be added to the greatest even integer smaller than $k$ to produce the odd integer $k$. This then proves that any positive integer $k$, where $k<j$, can be written as the sum of some combination of fractions from the series for $j$, where $j$ is odd. 
\subsection{Even value for \texorpdfstring{$j$}{}:}
The property of pairs of terms equidistant from the ends of the series summing to two still holds for even $j$, therefore any even positive integer $k$, where $k<j$, can be written as the sum of some combination of the terms from the series $\sum_{n=1}^{j} \frac{2n-1}{j}$ used to give $j$. 

If $j$ is even and greater than two, the first term $\frac{1}{j}$ and the term given by $n=\frac{j}{2}$ always sum to one. 
\begin {equation}
\frac {1}{j}+\frac{\bigl(2(\frac{j}{2})-1\bigr)}{j}=\frac{j}{j}=1
\end{equation}

If we know the first term and the term given by $n=\frac{j}{2}$ sum to one, we can subtract the pair that adds to one from the total series for $j$, then continue subtracting pairs that add to two until we are left with the odd number of interest. This is possible because $(j-1)$ is odd when $j$ is even, and any odd $(j-1)$ is still odd when a multiple of two is subtracted from it. 

The smallest odd number that can be produced in this way is three, as two fractions that sum to one are removed at the start. This thus removes two possible combinations of terms that would add to two, which would leave the smallest result as four. This is not the case because a pair adding to one was removed from the series, thus leaving $4-1=3$ as the smallest number that can be produced by this method. The proof is still complete; however, as we have shown how every odd $k$ greater than or equal to three can be produced, and have previously shown that the first term $\frac{1}{j}$ and the term given by $n=\frac{j}{2}$ always sum to equal one. This thus proves that any positive odd integer $k$, where $k<j$, can be found as the sum of some combination of fractions from the series for $j$, where $j$ is even. 

Example for $k = 7$ where $j = 10$:
\begin{equation}
    j=10=\sum_{n=1}^{10} \frac{2n-1}{10}=10=\frac{1}{10} + \frac{3}{10} + \frac{5}{10} + \frac{7}{10} + \frac{9}{10} + \frac{11}{10} + \frac{13}{10} + \frac{15}{10} + \frac{17}{10} + \frac{19}{10}
\end{equation}
\begin {equation}
7=\bigl(\sum_{n=1}^{10} \frac{2n-1}{10}\bigr)-\bigl((\frac{1}{10}+\frac{9}{10})+(\frac{3}{10}+\frac{17}{10})\bigr)
=\frac{5}{10}+\frac{7}{10}+\frac{11}{10}+\frac{13}{10}+\frac{15}{10}+\frac{19}{10}
\end{equation}
We see that the value $k=7$ is produced when fractions adding to one and two are subtracted from the series that gives 10. 

Example for $k=3$, where $j=10$
\begin{equation}
\bigl(\sum_{n=1}^{10} \frac{2n-1}{10}\bigr)-\bigl((\frac{1}{10}+\frac{9}{10})+(\frac{3}{10}+\frac{17}{10})+(\frac{5}{10}+\frac{15}{10})+(\frac{7}{10}+\frac{13}{10})\bigr)=\frac{11}{10}+\frac{19}{10}
\end{equation}

Rewriting the above statement, we find:
$10-(1+2+2+2)=\frac{11}{10}+\frac{19}{10}=3$.
These two fractions remain at the end because they are the two fractions that would have summed to two with $\frac{9}{10}$ and $\frac{1}{10}$ respectively, but these were removed to produce a pair adding to one. We thus see that if $j$ is even, three is the smallest odd integer that can be produced by subtracting pairs that add to one and two, as only two fractions remain, which when summed equal three.  

\begin{remark}
We can see that for $j=2$:
\begin{equation}
j=\sum_{n=1}^{2} \frac{2n-1}{2}=\frac{1}{2}+\frac{3}{2}
\end{equation}
\end{remark}

It is not possible to write any integer other than two as the sum of these fractions. (The integer one cannot be found as a sum, as there is only one pair, which when summed gives two. We also note that neither fraction is an integer.) It is only with $j \geq 3$ that we begin to have more than a single pair of fractions in the series, or a term in the series that equals one, thereby enabling one to find combinations for $k<j$. This then implies we must require $j>2$. 
\end{proof}

\section{Proofs relating to unimodality and symmetry}
\label{sec:5}
\subsection{Proofs for relevant lemmas}

Before beginning the proof, we provide necessary background on the Gaussian binomial coefficients. These are q-analogs of the usual binomial coefficients, hence they are polynomials in $q$ that reduce to the usual binomial coefficients when one takes the limit as $q$ approaches one. One can define a Gaussian binomial coefficient as the following, where we note that it is defined as zero when $h>j$ \cite{ten}.
\begin{equation}
\binom{j}{h}_q = \frac{(1-q)(1-q^{2}) \cdots (1-q^{j})}{(1-q)(1-q^2) \cdots (1-q^h)(1-q)(1-q^2) \cdots (1-q^{j-h})}
\end{equation}
These appear in problems such as the counting of lattice paths. In some cases one would wish to set $q$ to a particular value, though its value is irrelevant in our proofs. 
\begin{lemma}
\label{unimodal}
The nonzero terms of $f_{O_j,h}(k)$ are unimodal.
\end{lemma}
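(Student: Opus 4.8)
The plan is to convert $f_{O_j,h}(k)$ into a coefficient of a Gaussian binomial coefficient via an explicit bijection, and then deduce unimodality from the (classical) unimodality of $\binom{j}{h}_q$ together with a trivial sampling observation. Recall that $f_{O_j,h}(k)=q_{O_{2j-1},h}(kj)$ counts partitions of $kj$ into $h$ distinct odd parts each at most $2j-1$. Writing such a partition as $2a_1-1<2a_2-1<\cdots<2a_h-1$ with $1\le a_1<a_2<\cdots<a_h\le j$, the assignment $\{2a_i-1\}\mapsto\{a_1,\ldots,a_h\}$ is a bijection onto the $h$-element subsets $A\subseteq\{1,\ldots,j\}$, and the constraint $\sum_i(2a_i-1)=kj$ becomes $\sum_{a\in A}a=\tfrac{kj+h}{2}$. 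In particular $f_{O_j,h}(k)=0$ unless $kj+h$ is even, and otherwise $f_{O_j,h}(k)$ is the number of $h$-subsets of $\{1,\ldots,j\}$ with prescribed sum $\tfrac{kj+h}{2}$.

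Next I would invoke the standard identity
\begin{equation*}
\sum_{\substack{A\subseteq\{1,\ldots,j\}\\ |A|=h}} q^{\sum_{a\in A}a}=q^{\binom{h+1}{2}}\binom{j}{h}_q ,
\end{equation*}
so that $f_{O_j,h}(k)$ equals the coefficient of $q^{(kj+h)/2-\binom{h+1}{2}}$ in $\binom{j}{h}_q$. Two facts about this polynomial are all that is needed: its coefficient sequence is unimodal (the result alluded to in the introduction, for which a combinatorial proof is available in the literature on $\binom{j}{h}_q$), and its support is the full interval of integers from $0$ to $h(j-h)$, since every integer between the minimal subset sum $\binom{h+1}{2}$ and the maximal subset sum $\binom{h+1}{2}+h(j-h)$ is attained. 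Consequently, as $k$ runs over the integers in $(0,j)$ for which $kj+h$ is even, the exponents $e_k=\tfrac{kj+h}{2}-\binom{h+1}{2}$ form an arithmetic progression (of common difference $j/2$ when $j$ is even and $j$ when $j$ is odd), and the $k$ for which $e_k\in[0,h(j-h)]$ — equivalently, the $k$ with $f_{O_j,h}(k)\neq 0$ — form a single contiguous block.

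Finally I would record the elementary observation that sampling a finite unimodal sequence at the terms of an arithmetic progression of indices again yields a unimodal sequence: if $c_0\le c_1\le\cdots\le c_M\ge\cdots\ge c_N$ and $0\le e_0<e_1<\cdots<e_r\le N$ are in arithmetic progression, then $(c_{e_0},\ldots,c_{e_r})$ first weakly increases and then weakly decreases. Applying this with $(c_\ell)$ the coefficient sequence of $\binom{j}{h}_q$ shows that the nonzero terms of $f_{O_j,h}(k)$ — which by the previous paragraph are exactly the sampled values $c_{e_k}$ over a contiguous range of $k$ — are unimodal, completing the proof. The combinatorial heart of the argument is the subset bijection; the only points that require genuine care are the parity-and-range bookkeeping that guarantees the nonzero $k$ are contiguous and the $e_k$ equally spaced, and the invocation of a proof of the unimodality of $\binom{j}{h}_q$, which I expect to be the main thing to pin down precisely (it is the one ingredient not proved from scratch here).
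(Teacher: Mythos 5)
Your proposal is correct and follows essentially the same route as the paper: the bijection sending $h$ distinct odd parts $2a_i-1$ to the $h$-subset $\{a_1,\ldots,a_h\}\subseteq\{1,\ldots,j\}$, the identification of the resulting counts with coefficients of $q^{\binom{h+1}{2}}\binom{j}{h}_q$, the cited (O'Hara-type) unimodality of the Gaussian binomial coefficient, and the observation that sampling a unimodal coefficient sequence along increasing exponents again yields a unimodal sequence. Your parity and contiguity bookkeeping is a bit more explicit than the paper's, but the argument is the same.
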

\begin{proof}
There is a known generating function for partitions of an integer $m$ into $h$ parts drawn from $\{ 1, 2, \ldots, j\}$, where the exponent of $q$ is $m$. See Section 1.6 of Aigner's textbook for a discussion of this formula \cite{eleven}. The generating function of interest is
\begin{equation}
    q^{\binom{h+1}{2}} \binom{j}{h}_q
\end{equation}
This is a polynomial in $q$ that can be read in the same way as a typical generating function. 

In our problem we partition an integer $kj$ into $h$ terms drawn from the set of the first $j$ odd integers. If one does not require $k$ to be an integer, one can form a bijection by mapping each summand drawn from $\{ 1, 2, \ldots, j\}$ to the corresponding odd integer. We then see that the number of partitions of the integer $\frac{h(h+1)}{2}+c$ into $h$ parts drawn from $\{ 1, 2, \ldots, j\}$ is equal to the number of partitions of $h^2 + 2c$ into $h$ parts from $\{ 1,3,5,\ldots,2j-1 \}$, where $c \in \mathbb{N}$.

Say we have a set $A$ of $h$ terms from the set $\{ 1,2,3,\ldots,j-1,j \}$. Now suppose $A$ has the same sum as another set $B$ of $h$ terms from $\{ 1,2,3,\ldots,j-1,j \}$. If we replace each term of the two sets with the corresponding odd integers, forming the sets $C$ and $D$, then we see that $C$ has the same sum as $D$. This is because we have multiplied each sum by $2$ before subtracting $h$. We then see that the coefficients of the generating function for partitions into $h$ distinct parts from $\{ 1,2,3,\ldots,j-1,j \}$ must be the same as the coefficients of the generating function for partitions into $h$ distinct parts from $\{ 1,3,5,\ldots,2j-1 \}$, as each partition in the former case maps to one partition in the latter case. This indicates that there is a bijection, where the exact rule given in the previous paragraph readily follows from well-known formulae for the sum of the first $h$ integers and the sum of the first $h$ odd integers. 

For example, if $h=3$, we see that $1+4+6=1+2+8=11$. We also see that $1+7+11=1+3+15=19=2(11)-3$, as we have mapped $1$ to the first odd integer, $4$ to the fourth odd integer, $6$ to the sixth odd integer, et cetera. This bijection only works because we have restricted the number of terms to $h$. We cannot do this for $r(x)$, which makes working with it harder. 

To illustrate this bijection, one can examine the following generating functions, which correspond to partitions into $h$ parts drawn from the first $j$ integers and the first $j$ odd integers, respectively. Note that the exponent of $y$ is the number of parts. 

\begin{equation}
    \displaystyle\prod_{n=1}^{j} (1+yx^n)
\end{equation}
\begin{equation}
    \displaystyle\prod_{n=1}^{j} (1+yx^{2n-1})
\end{equation}

If one examines the terms that share a given exponent $h$ of $y$, one finds that the two polynomials have the same coefficients. All that changes are the exponents of $x$. For example, we have:
\begin{equation}
    \displaystyle\prod_{n=1}^{6} (1+yx^n) = x^{21}y^6+x^{20}y^5+x^{19}y^5+x^{18}y^5+x^{18}y^4+x^{17}y^5+x^{17}y^4+x^{16}y^5+2x^{16}y^4
\end{equation}
\begin{equation*}
    +x^{15}y^5+2x^{15}y^4+x^{15}y^3+3x^{14}y^4+x^{14}y^3+2x^{13}y^4+2x^{13}y^3+2x^{12}y^4+3x^{12}y^3+x^{11}y^4+3x^{11}y^3
\end{equation*}
\begin{equation*}
    +x^{11}y^2+x^{10}y^4+3x^{10}y^3+x^{10}y^2+3x^9y^3+2x^9y^2+2x^8y^3+2x^8y^2+x^7y^3+ 3x^7y^2+x^6y^3+2x^6y^2
\end{equation*}
\begin{equation*}
    +x^6y+2x^5y^2+x^5y+x^4y^2+x^4y+x^3y^2+x^3y+x^2y+xy+1
\end{equation*}

When we pull out the terms containing $y^2$, hence corresponding to two terms per partition, we have
\begin{equation}
    x^{11}+x^{10}+2x^9+2x^8+3x^7+2x^6+2x^5+x^4+x^3
\end{equation}
Repeating the process with the odd case:
\begin{equation}
    \displaystyle\prod_{n=1}^{6} (1+yx^{2n-1}) = x^{36}y^6+x^{35}y^5+x^{33}y^5+x^{32}y^4+x^{31}y^5+x^{30}y^4+x^{29}y^5+2x^{28}y^4+x^{27}y^5
\end{equation}
\begin{equation*}
    x^{27}y^3+2x^{26}y^4+x^{25}y^5+x^{25}y^3+3x^{24}y^4+2x^{23}y^3+2x^{22}y^4+3x^{21}y^3+2x^{20}y^4+x^{20}y^2+3x^{19}y^3
\end{equation*}
\begin{equation*}
    +x^{18}y^4+x^{18}y^2+3x^{17}y^3+x^{16}y^4+2x^{16}y^2+3x^{15}y^3+2x^{14}y^2+2x^{13}y^3+3x^{12}y^2+x^{11}y^3+x^{11}y
\end{equation*}
\begin{equation*}
    +2x^{10}y^2+x^9y^3+x^9y+2x^8y^2+x^7y+x^6y^2+x^5y+x^4y^2+x^3y+xy+1
\end{equation*}
Pulling out terms containing $y^2$:
\begin{equation}
    x^{20}+x^{18}+2x^{16}+2x^{14}+3x^{12}+2x^{10}+2x^8+x^6+x^4
\end{equation}
Both sequences of coefficients follow those of 
\begin{equation}
q^{\binom{2+1}{2}} \binom{6}{2}_q = q^{11}+q^{10}+2q^9+2q^8+3q^7+2q^6+2q^5+q^4+q^3
\end{equation}

When we require $k$ to be an integer, we require each exponent of $x$ to be an integral multiple of $6$, as $j=6$. After pulling those terms out from \begin{equation}
    x^{20}+x^{18}+2x^{16}+2x^{14}+3x^{12}+2x^{10}+2x^8+x^6+x^4
\end{equation}
We have
\begin{equation}
    x^{18}+3x^{12}+x^6
\end{equation}
As expected, this is unimodal. 

It is known that the Gaussian binomial coefficients are unimodal. A constructive combinatorial proof was produced by O'Hara \cite{eleven}. If one knows that $\binom{j}{h}_q$ is unimodal, one sees that $q^{\binom{h+1}{2}} \binom{j}{h}_q$ is unimodal as well, as multiplying each term by the same power of $q$ does not affect unimodality. As we have a bijection to our problem when $k$ is not necessarily an integer, we see we have unimodality if $k$ is not required to be an integer. This generating function corresponds to the terms of the following polynomial, where we only examine terms with the same exponent of $y$. 
\begin{equation}
    \displaystyle\prod_{n=1}^{j} (1+yx^{2n-1})
\end{equation}

The nonzero terms of $r_h(x)$ are the terms from the polynomial above where the exponent of $x$ is $kj$, $k \in \mathbb{N}$. If one does not change the order of the terms relative to one another, any polynomial composed of terms taken from a unimodal polynomial must also be unimodal. We thus see that the nonzero coefficients of $r_h(x)$ are unimodal. 
\end{proof}

\begin{lemma}
\label{symmetric}
The coefficients of $r_h(x)$ are symmetric about the coefficient corresponding to $k=h$.
\end{lemma}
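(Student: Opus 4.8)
The plan is to prove the symmetry directly, by exhibiting an explicit involution on the underlying partitions, without needing any deep property of the Gaussian binomial coefficients. Recall from the discussion preceding the lemma that $f_{O_j,h}(k) = q_{O_{2j-1},h}(kj)$, so $f_{O_j,h}(k)$ is the number of $h$-element subsets $A \subseteq \{1,3,5,\ldots,2j-1\}$ whose elements sum to $kj$. First I would introduce the map $\phi$ sending an element $a \in \{1,3,\ldots,2j-1\}$ to $2j-a$. Since $a$ is odd with $1 \le a \le 2j-1$, the image $2j-a$ is again odd and lies in $\{1,3,\ldots,2j-1\}$, and $\phi$ is its own inverse, so $\phi$ is an involution of the ground set. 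Extending it to subsets by $\phi(A) = \{\,2j-a : a \in A\,\}$ gives an involution on the family of $h$-element subsets, since $\phi$ is a bijection of the ground set and preserves cardinality.

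The key observation is that if $A$ has $h$ elements summing to $S$, then $\phi(A)$ has $h$ elements summing to $\sum_{a\in A}(2j-a) = 2hj - S$. Hence $\phi$ restricts to a bijection between the $h$-subsets summing to $S$ and those summing to $2hj-S$, giving $q_{O_{2j-1},h}(S) = q_{O_{2j-1},h}(2hj-S)$ for every $S$. Taking $S = kj$ yields $f_{O_j,h}(k) = q_{O_{2j-1},h}(kj) = q_{O_{2j-1},h}\bigl((2h-k)j\bigr) = f_{O_j,h}(2h-k)$. Since the coefficient of $x^k$ in $r_h(x)$ is $f_{O_j,h}(k)$ and the map $k \mapsto 2h-k$ is reflection about $h$, this is precisely the asserted symmetry about the term corresponding to $k=h$; equivalently $x^{2h} r_h(1/x) = r_h(x)$. (An alternative route, which I would mention, is to combine the bijection of Lemma \ref{unimodal} with the palindromicity of $\binom{j}{h}_q$, but the reflection $a \mapsto 2j-a$ is more self-contained.)

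The only point needing a little care is consistency with the phrase ``nonzero terms.'' I would note that the set of attainable sums $S$ for $h$-element subsets of $\{1,3,\ldots,2j-1\}$ is itself symmetric about $hj$, by the identity just proved, with minimal sum $1+3+\cdots+(2h-1) = h^2$ and maximal sum $(2j-1)+(2j-3)+\cdots+(2j-2h+1) = 2hj-h^2$ reflecting to one another; dividing by $j$, the $k$-values with $f_{O_j,h}(k)\neq 0$ form a set symmetric about $k=h$ contained in $[\,h^2/j,\ 2h-h^2/j\,]$, so the nonzero coefficients of $r_h(x)$ really are symmetric about the central term $k=h$, and the edge case $h=j$ (where $kj=j^2$ forces $k=j$, excluded by $0<k<j$, so $r_h(x)$ is empty) causes no trouble. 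I do not anticipate a genuine obstacle: the involution $a\mapsto 2j-a$ does essentially all the work, and what remains is bookkeeping to translate ``symmetric about $hj$'' in the summed variable into ``symmetric about $k=h$'' in the coefficient index of $r_h(x)$.
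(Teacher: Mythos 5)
Your proof is correct and is essentially the paper's own argument: the involution $a \mapsto 2j-a$ on the odd numerators is exactly the paper's reflection of each term to the position equidistant from the opposite end of the series, giving $f_{O_j,h}(k)=f_{O_j,h}(2h-k)$ and hence symmetry about $k=h$. The extra bookkeeping you add about attainable sums and edge cases is fine but not needed beyond what the reflection already provides.
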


\begin{proof}
Picture a group of $h$ terms that sum to some value $k_1$. If we measure the distance of each term from the left end of the series for $j$, then replace that term with a term located that many units from the right end of the series, we find a group of terms that sum to $(2h-k_1)$, hence corresponding to $k=2h-k_1$. We observe that the values $k_1$ and $2h-k_1$ are equidistant from $k=h$. 
\begin{equation*}
    \frac{2(n_1)-1}{j}+\frac{2(n_2)-1}{j}+\cdots+\frac{2(n_{h-1})-1}{j}+\frac{2(n_h)-1}{j}=\frac{2(n_1+n_2+\cdots+n_h)-h}{j}=k_1
\end{equation*}
\begin{equation*}
    \frac{2(j-n_1+1)-1}{j}+\frac{2(j-n_2+1)-1}{j}+\cdots+\frac{2(j-n_{h-1}+1)-1}{j}+\frac{2(j-n_h+1)-1}{j}
\end{equation*}
\begin{equation}
    =\frac{2hj+h-2(n_1+n_2+\cdots+n_{h-1}+n_h)}{j}= 2h - k_1
\end{equation}

We observe that we are really reflecting the group of selected boxes shown below over a vertical line located in the center of the series. Any combination for $k_1$ can then be reflected about a central vertical line to find a new combination for $2h-k_1$. As $k_1$ and $2h-k_1$ are equidistant from $k=h$, we can then see that we have symmetry about $k=h$. We write an example below, where the odd numerators are listed out. Boxes are drawn around terms used in a sum. Here $k=5$, $j=8$, and $h=4$:
\begin{center}
1 \fbox{3} 5 7 \fbox{9} 11 \fbox{13} \fbox{15}
\end{center}
We see that $3+9+13+15=40=(5)(8)$. After reflecting over a central axis, we have:
\begin{center}
\fbox{1} \fbox{3} 5 \fbox{7} 9 11 \fbox{13} 15
\end{center}
Note that $1+3+7+13=24=(3)(8)$, hence corresponding to $k=3$. 

As we pointed out in the proof for unimodality that this problem is connected to the Gaussian binomial coefficients, it appears that this proof for symmetry can be combined with the bijection in the proof for unimodality to potentially serve as an excessively long alternative proof that $\binom{j}{h}_q$ is symmetric. The symmetry of Gaussian binomial coefficients is a well-known result, though it is usually proven in a different, more direct way. 
\end{proof}
\begin{lemma}
\label{Sum_to_h}
A group of $h$ consecutive terms from the center of the series for $j$ sums to $k=h$. The first $\frac{h}{2}$ consecutive terms and the last $\frac{h}{2}$ consecutive terms together sum to $k=h$.
\end{lemma}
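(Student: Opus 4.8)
The plan is to reduce everything to the elementary identity $\sum_{n=1}^{m}(2n-1)=m^{2}$ (the sum of the first $m$ odd integers is $m^{2}$), which lets me evaluate the numerator–sum of any block of consecutive terms of the series $\tfrac{1}{j}+\tfrac{3}{j}+\cdots+\tfrac{2j-1}{j}$ by a single subtraction of squares. Concretely, the block consisting of the $a$-th through $(a+h-1)$-th numerators has numerator sum $(a+h-1)^{2}-(a-1)^{2}=h(2a+h-2)$, so as a sum of fractions it contributes $\tfrac{h(2a+h-2)}{j}$, which equals the integer $h$ exactly when $2a+h-2=j$. Everything else is choosing the right index $a$ and checking ranges.

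First I would prove the ``centered block'' assertion. Since we are choosing $h$ distinct terms from a list of $j$, we have $h\le j$; to seat a block of $h$ terms symmetrically in a list of $j$ we also need $h\equiv j\pmod 2$. Under these hypotheses I set $a=\tfrac{j-h+2}{2}$: this is a positive integer, $a+h-1=\tfrac{j+h}{2}\le j$, and there are $\tfrac{j-h}{2}$ terms before the block and $\tfrac{j-h}{2}$ after it, so it is genuinely central. By the displayed computation $2a+h-2=j$, so the block of numerators sums to $hj$ and the corresponding fractions sum to $k=h$.

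Next I would prove the ``two end blocks'' statement, which presupposes $h$ even. The first $\tfrac h2$ numerators are $1,3,\dots,h-1$, summing to $\bigl(\tfrac h2\bigr)^{2}$ by the odd-sum identity; the last $\tfrac h2$ numerators are $2j-h+1,2j-h+3,\dots,2j-1$, which by the same identity sum to $j^{2}-\bigl(j-\tfrac h2\bigr)^{2}=hj-\bigl(\tfrac h2\bigr)^{2}$. Adding, the total numerator is $hj$, so these $h$ fractions sum to $h$; and they are distinct, being the $\tfrac h2$ smallest and $\tfrac h2$ largest terms of the series, which are disjoint precisely because $h\le j$. I would tuck in a small sanity check, e.g.\ $j=8,h=4$: the central block gives $5+7+9+11=32=4\cdot 8$, and the two end blocks give $(1+3)+(13+15)=32=4\cdot 8$.

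There is no substantive obstacle; the lemma is a bookkeeping observation that $k=h$ is always attained, which is exactly what pins down the location of the mode in Theorem~\ref{thm1}. The only points requiring care are the parity and range conditions — $h\le j$ throughout, a centered block needing $h\equiv j\pmod 2$ while the two-end-block form needs $h$ even but works for either parity of $j$ — and verifying that the indices chosen actually lie in $\{1,\dots,j\}$ and yield distinct odd numerators $\le 2j-1$.
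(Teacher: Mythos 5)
Your proof is correct and follows essentially the same route as the paper: a direct evaluation of the numerator sums of the centered block and of the two end blocks, showing each equals $hj$ (you package the computation via $\sum_{n=1}^{m}(2n-1)=m^{2}$ and a difference of squares, while the paper sums the arithmetic progressions directly). Your explicit parity and range conditions ($h\equiv j \pmod 2$ for the centered block, $h$ even for the end blocks, $h\le j$) are left implicit in the lemma itself but are exactly the case distinctions the paper handles later in the proof of Theorem \ref{thm1}, so they are a welcome clarification rather than a deviation.
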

\begin{proof}
\begin{equation*}
    \frac{2(\frac{j-h}{2}+1)-1}{j}+\frac{2(\frac{j-h}{2}+2)-1}{j}+\cdots+\frac{2(\frac{j-h}{2}+h-1)-1}{j}+\frac{2(\frac{j-h}{2}+h)-1}{j}
\end{equation*}
\begin{equation}
    =\frac{h(j-h)+h(h+1)-h}{j}=\frac{hj}{j}=h
\end{equation}
We thus see that a group of $h$ terms centered in the series for $j$ sums to $k=h$.

\begin{equation*}
    \frac{2(1)-1}{j}+\frac{2(2)-1}{j}+\cdots+\frac{2(\frac{h}{2})-1}{j}+ \frac{2(j)-1}{j}+\frac{2(j-1)-1}{j}+\cdots+\frac{2j+1-h}{j}
\end{equation*}
\begin{equation*}
    =\frac{\frac{h}{2}(\frac{h}{2}+1)-\frac{h}{2}}{j}+\frac{2(j+(j-1)+(j-2)+\cdots+(j-\frac{h}{2}+1))-\frac{h}{2}}{j}
\end{equation*}
\begin{equation}
    =\frac{\frac{h}{2}(\frac{h}{2}+1)-\frac{h}{2}}{j}+\frac{hj-(\frac{h}{2}-1)\frac{h}{2}-\frac{h}{2}}{j}=\frac{hj}{j}=h
\end{equation}
This then indicates that the first $\frac{h}{2}$ terms and the last $\frac{h}{2}$ terms together sum to $k=h$.
\end{proof}

\subsection{Proof for Theorem \ref{thm1}}
\begin{proof}
By Lemmas \ref{unimodal} and \ref{symmetric} the nonzero coefficients of $r_h(x)$ are unimodal and symmetric about the coefficient corresponding to $k=h$. These results alone do not prove that the maximum nonzero coefficient of $r_h(x)$ is $f_{O_j,h}(h)$, though, as one must dismiss the scenario where $f_{O_j,h}(h)=0$ but $f_{O_j,h}(k)$ is nonzero for values of $k$ equidistant from $h$. 

We see that there is at least one partition for $k=h$ when both $h$ and $j$ are even, or both $h$ and $j$ are odd, as in these cases it is possible to find a centered group of $h$ consecutive terms from the series for $j$, which by Lemma \ref{Sum_to_h} sum to $h$. If $h$ is even and $j$ is odd, by Lemma \ref{Sum_to_h} one can take the first $\frac{h}{2}$ terms and the last $\frac{h}{2}$ terms from the series for $j$, which also sum to $k=h$. If $h$ is odd and $j$ is even, one cannot find a centered group of $h$ terms or two groups of $\frac{h}{2}$ terms. This is not a problem, though, as no $k$ can be found as the sum of an odd number of terms when $j$ is even. The sum of an odd number of odd integers is odd, though $j$ is even, hence an odd sum divided by an even $j$ cannot give an integral $k$. We thus have symmetry about $k=h$, see that $f_{O_j,h}(k)$ is unimodal, and know that we have at least one partition for $k=h$ when there are partitions for any integer $k$, where $0<k<j$. This then indicates that $k=h$ must have the most partitions with a given $j$, or none at all, in which case every value of $f_{O_j,h}(k)$ is zero with that given $j$. 
\end{proof}

\subsection{Proof for proposition \ref{prop1}}
\begin{proof}
Suppose we consider two $k$-values of $k_1$ and $(j-k_1)$, where we note that $k_1$ and $(j-k_1)$ are equidistant from the ends of the sequence $\{ 1,2,3,\ldots,j-1 \}$. We know that each combination for $k_1$ is a portion of the larger series for $j$, thus any combination for the $k$-value of $(j-k_1)$ is what is left over whenever each possible combination for $k_1$ is removed from the larger series for $j$. This then necessitates that $k_1$ and $(j-k_1)$ must have the same number of combinations, as if there are $M$ combinations for $k_1$, then there are only $M$ ways to subtract $k_1$ from $j$, thus causing there to also be $M$ combinations for $(j-k_1)$. We then have $f_{O_j}(k_1)=f_{O_j}(j-k_1) \forall k_1<j$.
\end{proof}

\section{Even numerators}
\label{sec:6}
\subsection{Proof for Theorem \ref{thm2}}
We note that the sum of the first $x$ even integers can be given by 

\begin{equation}
    \sum_{n=1}^{x} 2n = x^2+x
\end{equation}

We can thus transform our problem into counting solutions for a given $t$ to the following Diophantine equation, where $x,y,t \in \mathbb{N}$ and $0<y<x<t$. Note that the restriction $0<y<x<t$ serves to prevent one from being able to find obvious solutions that hold for all $t$.

\begin{equation}
    \frac{x^2+x}{y}=t
\end{equation}
For matters of convenience that will later become clear, it is useful to alter our restrictions within the majority of our proofs, temporarily inserting two trivial solutions. In fact, if we alter our restrictions to $0<x \leq t$ and $0<y \leq t+1$, we only add the trivial solutions of $x=t$ and $x=t-1$, which are solutions for every $t$. 

\newpage

\begin{lemma}
There exist exactly two integral solutions to $t=\frac{x^2+x}{y}$ that satisfy $0<x\leq t$ and $0<y\leq (t+1)$ but not $0<y<x<t$.
\end{lemma}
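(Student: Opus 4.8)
The plan is to exhibit the two promised solutions explicitly and then argue that no others arise from relaxing the constraints. First I would observe that under the relaxed restrictions $0<x\le t$ and $0<y\le t+1$, the Diophantine equation $t=\frac{x^2+x}{y}$ is equivalent to $y=\frac{x^2+x}{t}=\frac{x(x+1)}{t}$, so I need to find all integers $x$ with $0<x\le t$ for which $t\mid x(x+1)$ and the resulting $y=\frac{x(x+1)}{t}$ satisfies $0<y\le t+1$, while excluding those $(x,y)$ that already satisfy the original $0<y<x<t$.

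Next I would identify the two candidate extra solutions: $x=t$, which gives $y=\frac{t(t+1)}{t}=t+1$, and $x=t-1$, which gives $y=\frac{(t-1)t}{t}=t-1$. Both clearly satisfy $0<x\le t$ and $0<y\le t+1$. The solution $x=t$ fails $x<t$, and the solution $x=t-1$ (which has $y=x=t-1$) fails $y<x$; hence neither lies in the original region, so each is genuinely "new." I would check these are the only such solutions by noting that both lie on the boundary of the enlarged region in the variable $x$ (namely $x=t$ and $x=t-1$), and that any solution with $x\le t-2$ automatically has $y=\frac{x(x+1)}{t}<\frac{(t-1)t}{t}=t-1$, and also $y\ge 1$ forces $x(x+1)\ge t$; one then verifies that for $x\le t-2$ the inequality $0<y<x<t$ holds as soon as $t\mid x(x+1)$, so such solutions are precisely the ones already counted by the original restriction. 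Thus the relaxation adds exactly the two boundary solutions and no others.

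The main obstacle — really the only point requiring care — is the claim that every solution with $x\le t-2$ satisfying $t\mid x(x+1)$ also satisfies $y<x$, i.e.\ that the relaxation introduces no spurious interior solutions beyond the two boundary ones. Here I would argue: if $t\mid x(x+1)$ and $1\le x\le t-2$, then $y=\frac{x(x+1)}{t}$; since $x+1\le t-1<t$ we get $y=\frac{x(x+1)}{t}<\frac{x\cdot t}{t}=x$, so $y<x<t$ automatically, and $y\ge 1$ since $y$ is a positive integer (it is positive because $x(x+1)>0$, and integral by the divisibility hypothesis). Therefore any such solution already meets $0<y<x<t$ and is not "new." Combining this with the explicit verification that $x=t$ and $x=t-1$ are solutions of the relaxed system but not the original, we conclude there are exactly two integral solutions satisfying $0<x\le t$ and $0<y\le t+1$ but not $0<y<x<t$, which is the statement of the lemma. $\Box$
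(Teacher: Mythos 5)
Your proof is correct, and it reaches the conclusion by a slightly different decomposition than the paper. The paper splits into cases according to the size of $y$ relative to $x$: it shows $y>x+1$ forces $t<x$ (violating $x\le t$), that $y=x+1$ and $y=x$ occur only at the two trivial solutions $(x,y)=(t,t+1)$ and $(t-1,t-1)$, and that every remaining solution has $y<x$ and hence lies in the original region; along the way it inserts a side argument that $y\ne t$ because $t^2=x(x+1)$ would force two consecutive integers to be perfect squares. You instead eliminate $y$ entirely by writing $y=\frac{x(x+1)}{t}$ and split on the value of $x$: the boundary values $x=t$ and $x=t-1$ give exactly the two trivial solutions, while $x\le t-2$ gives $y=\frac{x(x+1)}{t}<x$ (since $x+1<t$), so any such divisibility solution already satisfies $0<y<x<t$. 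The two arguments use the same elementary inequality in dual form, but your organization by $x$ is a bit cleaner: it makes the "no spurious interior solutions" step a one-line estimate and renders the paper's perfect-square digression unnecessary. The only caveat, shared equally by the paper's proof, is the implicit assumption that $t\ge 2$ so that $x=t-1$ is a legitimate positive solution; for $t=1$ the count degenerates, consistent with the fact that the main theorem is only meaningful for $t\ge 2$.
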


\begin{proof}
Let $y>(x+1)$, thus $\frac{x+1}{y}<1$. We then see $\frac{x(x+1)}{y}=t<x$, which is not allowed by our new restriction $0<x\leq t$. We see that $y$ can only equal $x+1$ when $t=x$, which is the trivial solution of $y=t+1$. The variable $y$ can only equal $x$ when $t$ is $x+1$, which is the trivial solution $y=t-1$. $y$ cannot equal $t$, as this would imply that $t^2 = x(x+1)$. For a natural number to be a perfect square, its prime factors must all appear an even number of times. Consecutive natural numbers share no prime factors, thus both $x$ and $x+1$ would have to be perfect squares, which is impossible. We then see that all remaining solutions fit the restriction $0<y<x<t$, hence the two trivial solutions are the only points added by the restriction change.  
\end{proof}

\begin{lemma}
The number of solutions for a given $t$ is equal to the product of the number of solutions for each $p_{i}^{j}$, where we have the prime factorization $t=p_{i_1}^{j_1} p_{i_2}^{j_2} p_{i_3}^{j_3} \cdots$.
\end{lemma}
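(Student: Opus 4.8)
The plan is to reduce the count of solutions to a purely modular count and then invoke the Chinese Remainder Theorem. I work throughout under the enlarged restrictions $0<x\le t$ and $0<y\le t+1$ supplied by the previous lemma, and with the analogous enlarged restrictions $0<x\le p_i^{j_i}$, $0<y\le p_i^{j_i}+1$ for each prime power. First I would observe that for each admissible $x$ the value of $y$ is forced: the equation $ty=x(x+1)$ has a positive integer solution $y$ exactly when $t\mid x(x+1)$, in which case $y=x(x+1)/t$ is unique. Moreover the bound $y\le t+1$ is then automatic, since $x\le t$ gives $x(x+1)\le t(t+1)$, and $y>0$ is automatic since $x>0$. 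Hence the number of solutions for $t$ equals $N(t):=\#\{\,x:1\le x\le t,\ t\mid x(x+1)\,\}$, and likewise the number of solutions for $p_i^{j_i}$ equals $N(p_i^{j_i})$. So it suffices to prove $N(t)=\prod_i N(p_i^{j_i})$.

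Next I would note that $\{1,2,\ldots,t\}$ is a complete residue system modulo $t$, and whether $t\mid x(x+1)$ depends only on $x\bmod t$; thus $N(t)$ is precisely the number of residue classes $\bar x\in\mathbb{Z}/t\mathbb{Z}$ satisfying $\bar x(\bar x+1)=0$. Writing $t=p_{i_1}^{j_1}p_{i_2}^{j_2}\cdots$, the Chinese Remainder Theorem gives a ring isomorphism $\mathbb{Z}/t\mathbb{Z}\cong\prod_k \mathbb{Z}/p_{i_k}^{j_k}\mathbb{Z}$. Under this isomorphism the image of $\bar x(\bar x+1)$ is the tuple whose $k$-th coordinate is $\bar x_k(\bar x_k+1)$, and a tuple vanishes iff every coordinate vanishes; so $\bar x(\bar x+1)=0$ in $\mathbb{Z}/t\mathbb{Z}$ holds iff $\bar x_k(\bar x_k+1)=0$ in $\mathbb{Z}/p_{i_k}^{j_k}\mathbb{Z}$ for every $k$. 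Counting tuples coordinate by coordinate then yields $N(t)=\prod_k N(p_{i_k}^{j_k})$, which is the claim. (Equivalently, one could simply remark that the number of roots of the fixed polynomial $x^2+x$ modulo $t$ is a multiplicative function of $t$, but the explicit coordinate-wise bijection is the most transparent way to write it.)

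I expect the only point needing genuine care — as opposed to a real obstacle — to be the first step: one must verify that passing to the enlarged restrictions really does render the constraint on $y$ vacuous once $x\le t$, so that ``number of solutions'' is faithfully captured by the modular count $N(\cdot)$, both for $t$ and for each $p_i^{j_i}$, with the same convention on both sides. Everything after that is the standard multiplicativity argument for counting roots of a polynomial modulo $t$ via CRT.
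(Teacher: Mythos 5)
Your proof is correct and follows essentially the same route as the paper: rewrite the equation as $x^2+x\equiv 0 \bmod t$ and invoke the ring isomorphism of the Chinese Remainder Theorem to get multiplicativity of the root count. Your explicit verification that the $(x,y)$-solution count under the enlarged restrictions coincides with the modular root count (because $y$ is forced and its bounds are automatic) is a detail the paper leaves implicit, and it is a welcome addition rather than a divergence.
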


\begin{proof}
We see that we can rewrite our problem as 
\begin{equation}
x^2+x \equiv 0\bmod t
\end{equation}
It is known that if one has a polynomial $f(x)$, where 
\begin{equation}
f(x) \equiv 0\bmod t
\end{equation}
the number of solutions for a given $t$ can be given by the product of the number of solutions to each
\begin{equation}
    f(x) \equiv 0\bmod p_i^j
\end{equation}
where we have the prime factorization $t=p_{i_1}^{j_1} p_{i_2}^{j_2} p_{i_3}^{j_3} \cdots$. This is a consequence of the ring isomorphism of the Chinese Remainder Theorem.  
\end{proof}

\begin{lemma}
Let $x$ be a positive integer, where $x \neq t-1$ and $x$ is a solution to $\frac{x^2+x}{y}=t$, subject to the restrictions $0<x \leq t$ and $0<y \leq (t+1)$. $x$ must then be a multiple of at least one prime factor of $t$.
\end{lemma}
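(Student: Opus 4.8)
The plan is to prove the contrapositive: assuming that $x$ is divisible by no prime factor of $t$, I will show that $x = t-1$, which contradicts the hypothesis. The first step is to translate the equation into a divisibility statement. Since $y$ is a positive integer and $\frac{x^2+x}{y} = t$, we have $x(x+1) = ty$, so $t \mid x(x+1)$. The bounds $0 < y \le t+1$ play no role here beyond guaranteeing that $y$ is a positive integer; the range hypothesis that actually matters is $0 < x \le t$.

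Next, the assumption that no prime dividing $t$ divides $x$ is precisely the statement $\gcd(x,t) = 1$. Feeding this into $t \mid x(x+1)$ and using that a divisor coprime to one factor must divide the other (Euclid's lemma, i.e. multiplicativity of $\gcd$ against a coprime factor), we conclude $t \mid x+1$. Now I would invoke $0 < x \le t$, which gives $0 < x+1 \le t+1$. For $t \ge 2$ the only positive multiple of $t$ lying in the interval $(0, t+1]$ is $t$ itself, since $t+1$ is a multiple of $t$ only when $t = 1$. Hence $x+1 = t$, that is, $x = t-1$, contradicting $x \ne t-1$. Therefore $x$ must share a prime factor with $t$, i.e. $x$ is a multiple of at least one prime factor of $t$.

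There is no serious obstacle in this argument; the only care needed is bookkeeping around degenerate cases and spelling out why the hypothesis $x \ne t-1$ is exactly the right exclusion. In particular one should note the tacit assumption $t > 1$, consistent with the running hypothesis $0 < y < x < t$, which already forces $t$ to be sufficiently large, so that ``multiple of a prime factor of $t$'' coincides with $\gcd(x,t) > 1$. It is also worth remarking why this lemma is wanted: together with the earlier lemma reducing $x^2 + x \equiv 0 \bmod t$ to the prime-power moduli via the Chinese Remainder Theorem, it lets one restrict attention in the proof of Theorem~\ref{thm2} to candidate values of $x$ built from the prime-power factors of $t$, so that the solution count factors across the prime factorization of $t$ and ultimately yields $F_E(t) = 2^{\omega(t)} - 2$.
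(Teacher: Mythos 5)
Your proof is correct and is essentially the paper's argument: both exploit that $x$ and $x+1$ are coprime, so $\gcd(x,t)=1$ together with $x(x+1)=ty$ and the bound $0<x\le t$ forces $x=t-1$. The only cosmetic difference is that you apply Euclid's lemma on the $t$ side to get $t \mid x+1$ directly, whereas the paper divides on the $x$ side (writing $y=Ax$ and ruling out $A>1$); your explicit remark about the tacit assumption $t>1$ is a fair point that the paper also leaves implicit.
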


\begin{proof}
We observe that $x$ and $x+1$ are relatively prime, thus they share no prime factors. If $x$ and $t$ share no prime factors, $y$ must cancel all prime factors of $x$. We then have $y=Ax$, where $A$ is a natural number. If $A=1$, $x=t-1$, which is a trivial solution. 

Let $A>1$:
\begin{equation}
    t=\frac{x(x+1)}{Ax}=\frac{x+1}{A}
\end{equation}
We thus see that $t<x$, which is not allowed by our restriction $0<x\leq t$. This then indicates that the only way for $t$ and $x$ to share no prime factors is if $x=t-1$, hence indicating that all other $x$ one can find as solutions must share at least one prime factor with $t$.
\end{proof}

\newpage

\begin{lemma}
If $t$ is prime, then there are only two solutions. 
\end{lemma}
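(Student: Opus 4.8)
The plan is to work under the relaxed restrictions $0<x\le t$ and $0<y\le t+1$ used in the preceding lemmas, and to show that when $t=p$ is prime the equation $x^2+x=ty$ has no solutions beyond the two trivial ones $x=t-1$ and $x=t$ already identified. The crucial observation is that once $x$ is fixed, $y=\frac{x^2+x}{t}$ is forced, so counting solution pairs $(x,y)$ reduces to counting admissible values of $x$.

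First I would phrase the problem as a congruence: a pair $(x,y)$ with $0<y\le t+1$ is a solution exactly when $x(x+1)=x^2+x\equiv 0\bmod t$ together with the resulting quotient $y=\frac{x^2+x}{t}$ lying in $(0,t+1]$. Since $t=p$ is prime, $p\mid x(x+1)$ forces $p\mid x$ or $p\mid x+1$ (this is the previous lemma specialized to a prime modulus, or simply Euclid's lemma). Using $0<x\le p$: in the first case $x=p=t$; in the second case $2\le x+1\le p+1$, so $p\mid x+1$ forces $x+1=p$, that is $x=p-1=t-1$. Hence the only candidate values are $x=t-1$ and $x=t$.

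Next I would verify that each candidate actually yields an admissible $y$, so that neither is spurious. For $x=t$ we get $y=\frac{t(t+1)}{t}=t+1\le t+1$; for $x=t-1$ we get $y=\frac{(t-1)t}{t}=t-1$, a positive integer $\le t+1$ as soon as $t\ge 2$. Thus both candidates are genuine solutions, giving exactly two solutions under the relaxed restrictions — consistently with Theorem \ref{thm2}, this leaves $0=2^{\omega(t)}-2$ solutions once the two trivial ones are removed.

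I do not anticipate a real obstacle here; the only points requiring care are (i) that $y$ is uniquely determined by $x$, so a single $x$ cannot contribute several solutions, and (ii) the smallest-prime boundary case (e.g. $t=2$), where one checks that $x=p-1$ and $x=p$ are distinct and each gives $y\ge 1$. Both are immediate. An essentially equivalent alternative is to invoke the earlier lemma directly: any solution with $x\neq t-1$ must share a prime factor with $t$, and when $t$ is prime that factor must be $t$ itself, forcing $x=t$; one then finishes exactly as above.
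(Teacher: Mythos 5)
Your proposal is correct and follows essentially the same route as the paper: primality forces $p\mid x$ or $p\mid x+1$, and the relaxed bounds pin $x$ down to the two trivial values $t-1$ and $t$. The only cosmetic differences are that you invoke Euclid's lemma on $x(x+1)\equiv 0 \bmod p$ and use the bound $0<x\le t$ to exclude larger multiples of $t$, whereas the paper cites its earlier prime-factor lemma and excludes them via the bound $y\le t+1$; your explicit check that both candidates yield admissible $y$ is a welcome touch of extra care.
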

\begin{proof}
We know that $x$ and $t$ must share at least one prime factor, or none at all, where if they share none, then $x=t-1$. If $t$ is prime, it only has one prime factor, so $x$ must be $t-1$ or an integral multiple of $t$. Let $a \in \mathbb{Z}$, $a>0$, where $x=at$. If $a=1$, we have $x=t$ and $y=t+1$, which is one of our two trivial solutions. 
\begin{equation}
    \frac{x^2+x}{y}=\frac{t^2+t}{t+1}=t
\end{equation}
If $a>1$, we see that $y$ must increase, though that would mean $y>t+1$, which our restrictions $0<x \leq t$ and $0<y \leq t+1$ forbid. We thus have only two solutions when $t$ is prime, where these two solutions are the trivial solutions. 
\end{proof}
\begin{lemma}
Let $p$ be a prime number, and $n$ a natural number. The number of solutions for $p$ is the same as the number of solutions for $p^n$. 
\end{lemma}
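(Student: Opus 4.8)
The plan is to reduce the counting of solutions to a congruence problem and then exploit the coprimality of $x$ and $x+1$, exactly as in the preceding lemmas. First I would observe that, under the relaxed restrictions $0<x\le t$ and $0<y\le t+1$, a solution of $\frac{x^2+x}{y}=t$ is completely determined by $x$: given $x$ the value $y=(x^2+x)/t$ is forced, it is a positive integer precisely when $x^2+x\equiv 0\bmod t$, and the bound $y\le t+1$ is automatic once $0<x\le t$, since then $x(x+1)\le t(t+1)$. Hence the number of solutions for $t$ equals the number of residues $x$ modulo $t$ satisfying $x(x+1)\equiv 0\bmod t$.

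Next I would apply this with $t=p^n$. Because $x$ and $x+1$ are consecutive integers they share no prime factor, so $p$ divides at most one of them; consequently $p^n\mid x(x+1)$ if and only if $p^n\mid x$ or $p^n\mid x+1$. In a complete residue system modulo $p^n$ there is exactly one $x$ with $x\equiv 0$ and exactly one with $x\equiv -1$, and these are distinct because $p^n\ge 2$. Therefore $x(x+1)\equiv 0\bmod p^n$ has exactly two solutions, namely $x\equiv 0$ and $x\equiv p^n-1$, which are precisely the two trivial solutions $x=p^n$ and $x=p^n-1$ identified earlier in this section.

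Finally, running the same argument for $t=p$ (or simply quoting the earlier lemma that a prime $t$ has only two solutions) also yields exactly two solutions. Thus the number of solutions for $p^n$ equals the number of solutions for $p$, as claimed, and combined with the multiplicativity lemma this will give $2^{\omega(t)}$ solutions for general $t$ under the relaxed restrictions.

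I expect essentially no serious obstacle here. The only points needing care are the bookkeeping that a solution pair $(x,y)$ corresponds to a single root $x$ modulo $t$ together with the automatic verification of the bound on $y$, and the remark that the two roots $0$ and $-1$ are genuinely distinct modulo $p^n$ (which would fail only for $p^n=1$, excluded since $p$ is prime). Everything else is the coprimality observation already used throughout Section \ref{sec:6}.
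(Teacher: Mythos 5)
Your proof is correct, and it reaches the same count (exactly two solutions for $t=p^n$, namely $x=p^n$ and $x=p^n-1$) by a genuinely different route than the paper. The paper factors $x^2+x$ into $g(x)=x$ and $h(x)=x+1$, checks that $g'(x)=h'(x)=1\not\equiv 0 \bmod p$, and invokes Hensel's Lemma to lift the unique root of each factor modulo $p$ to a unique root modulo $p^n$, then combines the two via the coprimality of $x$ and $x+1$. You bypass Hensel entirely: since $x$ and $x+1$ are coprime, $p$ divides at most one of them, so $p^n \mid x(x+1)$ forces $p^n \mid x$ or $p^n \mid x+1$, giving exactly the two residues $x\equiv 0$ and $x\equiv -1 \bmod p^n$ (distinct because $p^n\ge 2$). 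Your version is more elementary and self-contained, and you also make explicit a piece of bookkeeping the paper leaves implicit — that under the relaxed restrictions $0<x\le t$, $0<y\le t+1$, each admissible residue $x$ determines $y=(x^2+x)/t$ uniquely and the bound on $y$ is automatic, so counting solutions really is counting roots of $x(x+1)\equiv 0 \bmod t$. What the paper's Hensel argument buys in exchange is generality: it is the standard machinery that would apply to any $f(x)$ whose roots modulo $p$ are simple, whereas your divisibility trick is special to the factorization into two coprime linear factors — which is all that is needed here.
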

\begin{proof}
We can split $x^2+x$ into its factors, hence enabling us to look at the following two subproblems:
\begin{equation}
    g(x)=x\equiv 0\bmod p
\end{equation}
\begin{equation}
    h(x)=x+1\equiv 0\bmod p
\end{equation}
The integer one is not a prime, thus we see that 
\begin{equation}
    \frac{dg}{dx}=1\not\equiv 0\bmod p
\end{equation}
\begin{equation}
    \frac{dh}{dx}=1\not\equiv 0\bmod p
\end{equation}
By Hensel's Lemma, there are unique solutions to $x\equiv 0\bmod p$ and $x+1\equiv 0\bmod p$. The polynomial $x^2+x$ has no repeated factors. We also note that $x$ and $x+1$ are relatively prime, hence this indicates that there are only two solutions for $x^2+x\equiv 0\bmod p^n$, where we see those solutions are $x=p^n$ and $x=p^n - 1$. In other words, we have the same number of solutions for $t=p$ and $t=p^n$.
\end{proof}
We are now in a position to combine our lemmas to prove the theorem. 

\begin{proof}
We know that the number of solutions to 
\begin{equation}
x^2+x \equiv 0\bmod t
\end{equation}
is equal to the product of the number of solutions to each
\begin{equation}
    x^2+x \equiv 0\bmod p_i^j
\end{equation}
where the prime factorization of $t$ is $t=p_{i_1}^{j_1} p_{i_2}^{j_2} p_{i_3}^{j_3} \cdots$. As there are two solutions for $t=p^n$, we see that there are $2^{\omega(t)}$ solutions for 
\begin{equation}
    \frac{x^2+x}{y}=t
\end{equation}
where $0<x\leq t$ and $0<y\leq t+1$, $x,y,t \in \mathbb{N}$. As this restriction change adds two trivial solutions, we thus see that for the restriction $0<y<x<t$, we have $2^{\omega(t)}-2$ solutions, hence $F_E(t)=2^{\omega(t)} - 2$. 
\end{proof}

\subsection{Proof for corollary \ref{cor2}}
\begin{proof}
In this section we provide a proof for the continuation we give in Corollary \ref{cor2}. We apply a slightly different take on the circle method and the concept of a generating function to achieve this. In the following subsection, we use part of this process as a stepping stone to produce the series identity in Corollary \ref{cor3}. If we examine our Diophantine equation, we see that if we have found a solution, we have 

\begin{equation}
    \bigg(\frac{x^2+x}{y}-t \bigg) = 0
\end{equation}

Shifting our restrictions from $0<y<x<t$ to $0<x\leq t$ and $0<y\leq t+1$ is convenient in part because it removes the need for $y$ to always be less than $x$. We can then eliminate $y$ by taking a product over its range of values. We see that whenever we find a solution, one of the factors equals zero, thus causing the entire polynomial to equal zero. 

\begin{equation}
    \displaystyle\prod_{y=1}^{t+1} \bigg(\frac{x^2+x}{y}-t \bigg) = 0
\end{equation}
After multiplying both sides by $(t+1)!$, we can simplify to 
\begin{equation}
    \Psi_t=\displaystyle\prod_{y=1}^{t+1}(x^2+x-yt)=0
\end{equation}

This is a family of polynomials in $x$, as $y$ is an index within the product and $t$ becomes some positive index that is constant within this product. We thus have a different polynomial $\Psi_t$ for each value of $t$. In a partition problem, one would normally work with the coefficients of the generating function, which is a polynomial, though instead we accomplish the same sort of thing by counting the positive integral roots of a different polynomial, which we call $\Psi_t$. This is a bit more natural for our partition problem than a typical, non-Dirichlet generating function. For comparison, we write the generating function for this problem below.

\begin{equation}
    F_E(t_1)x^{t_1}+F_E(t_1)x^{t_2}+F_E(t_3)x^{t_3}+F_E(t_4)x^{t_4}+\cdots
\end{equation}

We have a different function $\Psi_t$ associated with each $t$, where $\Psi_t$ is a function of $x$ alone, hence the number of positive integral roots of $\Psi_t$ is a function of $t$. We count the number of positive integral roots with the function $R(t)$. As we have added two trivial solutions, we see that $R(t)=F_E(t)+2$. Each coefficient in the generating function is then two less than the number of positive integral roots of $\Psi_t$, where one uses the appropriate value of $t$. For example, when $t=6$, we have
\begin{equation}
\Psi_6=x^{14}+7x^{13}-147x^{12}-973x^{11}+9107x^{10}+54621x^9-309953x^8
\end{equation}
\begin{equation*}
    -1578527x^7+6290256x^6+24636024x^5-76219920x^4-195456240x^3
\end{equation*}
\begin{equation*}
    +507586176x^2+609700608x-1410877440
\end{equation*}

We see that the positive integral roots are $x=2$, $x=3$, $x=5$, and $x=6$. These are the values of $x$ that are solutions when $t=6$, where we include the trivial solutions $x=5$ and $x=6$. 

By Theorem \ref{thm2}, we know that $R(t)=2^{\omega(t)}$. We now examine the following integral. We place $\Psi_t$ in the exponent of $z$, where the coefficient of $z$ is one. This may seem odd, though it becomes convenient later. Note that $\gamma$ is the unit circle in the complex plane, oriented counterclockwise. 

\begin{equation}
    y,t,x \in \mathbb{Z} \therefore \frac{1}{2\pi i}\int_\gamma z^{\Psi_t-1} dz \in \{0,1\}
\end{equation}

We have a solution when $\Psi_t=0$. This leaves the exponent of $z$ as negative one, hence causing the integral to evaluate as one. As the coefficient of $z$ is always exactly one, we have no convergence issues from the integral itself, as it is always identically zero or one. Rather than divide the circle into major and minor arcs, we instead repeatedly integrate over the undivided circle. As mentioned previously, we refer to the number of roots of $\Psi_t$ as $R(t)$. 

\begin{equation}
    R(t)=\sum_{x=1}^{t} \frac{1}{2\pi i}\int_\gamma z^{\Psi_t-1} dz
\end{equation}

We observe that we can rewrite the integral, where $\sinc$ is the normalized $\sinc$ function:
\begin{equation}
    \sum_{x=1}^{t} \frac{1}{2\pi i}\int_\gamma z^{\Psi_t-1} dz = \sum_{x=1}^{t} \sinc(\Psi_t)
\end{equation}
This works because $\sinc(m)=1$ when $m=0$, and zero when $m$ is a nonzero integer. We observe that since $\Psi_t$ is really a product of polynomials, where $x$, $y$, and $t$ are integers, $\Psi_t$ must output an integer for every value in our problem.  

\begin{figure}[H]
  \caption{Plot of the normalized function $\sinc(x)$}
  \centering
    \includegraphics[height = 5 cm]{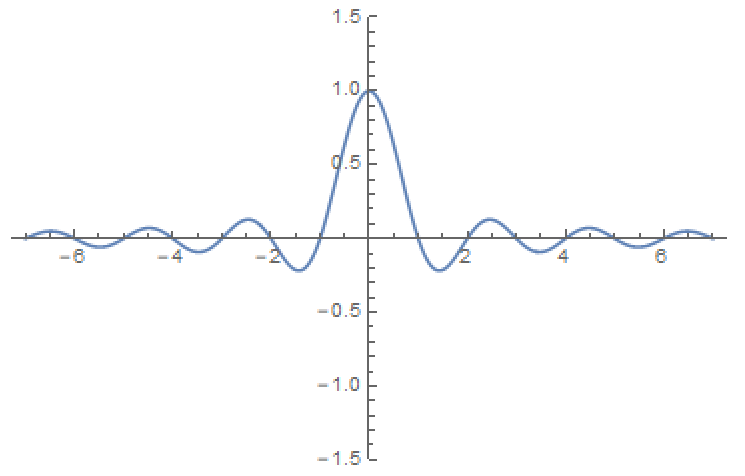}
\end{figure}
We thus have:
\begin{equation}
    2^{\omega(t)}=\bigg( \sum_{x=1}^{t} \sinc \bigg( \displaystyle\prod_{y=1}^{t+1}(x^2+x-yt) \bigg) \bigg)
\end{equation}

To extend the definition of this function to non-integral inputs and inputs with an imaginary part, we alter the definition, where we see that the expression still correctly counts the number of distinct prime factors of the positive integers. We rename $t$ to $z$ to highlight that in general, the input of this function is complex. 
\begin{equation}
    \omega(z)=\log_2 \bigg( \sum_{x=1}^{\left \lceil{\Re(z)}\right \rceil } \sinc \bigg( \displaystyle\prod_{y=1}^{\left \lceil{\Re(z)}\right \rceil+1}(x^2+x-yz) \bigg) \bigg)
\end{equation}

We thus have a continuation of the prime omega function, though we note that the inclusion of the ceiling function prevents it from being analytic everywhere. This enables us to define $\omega(z)$ for inputs that are not positive integers. This is somewhat similar to the practice of using the gamma function to assign a value to expressions such as the factorial of $\frac{1}{2}$, which otherwise would be absurd. For example, we can \enquote{count} the number of distinct prime factors of numbers such as $\pi$ and $e$. Such numbers obviously have no prime factors, though it is interesting to see that we can assign them a quantity of such factors by extending $\omega(z)$ to the complex plane. 
\begin{equation}
    \omega(\pi) \approx -9.9287 
\end{equation}
\begin{equation}
    \omega(e) \approx -6.0963 + 4.5323i
\end{equation}
\begin{equation}
    \omega(4+i) \approx 181729.6967 - 0.0798i
\end{equation}
\end{proof}

\subsection{Proof for corollary \ref{cor3}}
\begin{proof}
We observe the following, where the expression $\bigg(1-\frac{f(x)}{t} \bigg)_{t+1}$ is a pochhammer symbol. 
\begin{equation}
    \displaystyle\prod_{n=1}^{n+1}(f(x)-nt) = (-t)^{t+1} \bigg(1-\frac{f(x)}{t} \bigg)_{t+1}
\end{equation}
\begin{equation}
    (x)_n = \frac{\Gamma(x+n)}{\Gamma(x)}
\end{equation}
Hence when $x$ and $t$ are integers, we have
\begin{equation}
    \Psi_t=\displaystyle\prod_{y=1}^{t+1}(x^2+x-yt)=\frac{(-t)^{t+1}\Gamma(\frac{-x^2}{t}-\frac{x}{t}+t+2)}{\Gamma(\frac{-x^2}{t}-\frac{x}{t}+1)}
\end{equation}
We now write a Dirichlet series $D(s)$:
\begin{equation}
    D(s)=\sum_{t=1}^{\infty} \frac{a_t}{t^s}
\end{equation}
\begin{equation}
    a_t = \sum_{x=1}^{t} (\sinc(\Psi_t))
\end{equation}

We specify that $s$ is a real number greater than two. Since real numbers are complex numbers with an imaginary part of zero, $s$ is still complex. 
\begin{equation}
     D(s)=\sum_{t=1}^{\infty} \sum_{x=1}^{t} \frac{1}{2\pi i}\int_\gamma \frac{z^{\Psi_t-1}}{t^s} dz = \sum_{t=1}^{\infty} \sum_{x=1}^{t} \frac{\sinc(\Psi_t)}{t^s} 
\end{equation}

The coefficient $a_t$ increases by one every time we find a positive integral root of $\Psi_t$ using that value of $t$. $\Psi_t$ is a polynomial of degree $2t+2$, thus it cannot have more than $2t+2$ such roots for a given $t$. In our case, we cannot have more than one solution for each value of $x$ with a given $t$, where $0<x\leq t$, hence we have at most $t$ solutions. We thus have the following comparison:
 \begin{equation}
     \sum_{t=1}^{\infty} \sum_{x=1}^{t} \frac{\sinc(\Psi_t)}{t^s}   \leq \sum_{t=1}^{\infty} \frac{t}{t^s}
 \end{equation}
 As $s$ is a real number greater than two, we see that the series on the right is a convergent p-series.
 \begin{equation}
     \sum_{t=1}^{\infty} \frac{t}{t^s}= \sum_{t=1}^{\infty} \frac{1}{t^{s-1}} 
 \end{equation}
As $D(s)$ is always less than or equal to a convergent series, it must also converge. 

\noindent The following Dirichlet series is known \cite{twelve}.
\begin{equation}
    \sum_{n=1}^{\infty} \frac{2^{\omega(n)}}{n^s} = \frac{\zeta^2(s)}{\zeta(2s)}
\end{equation}
We thus have
\begin{equation}
\sum_{t=1}^{\infty} \sum_{x=1}^{t} \frac{\sinc(\Psi_t)}{t^s}  = \frac{\zeta^2(s)}{\zeta(2s)}
\end{equation}
Hence
\begin{equation}
\sum_{t=1}^{\infty} \sum_{x=1}^{t} \frac{\sinc\bigg(\frac{(-t)^{t+1}\Gamma(\frac{-x^2}{t}-\frac{x}{t}+t+2)}{\Gamma(\frac{-x^2}{t}-\frac{x}{t}+1)} \bigg)}{t^s}  = \frac{\zeta^2(s)}{\zeta(2s)}
\end{equation}
\end{proof}

\newpage 

\section{Proof for the special case of restriction to two terms}
\label{sec:7}

We divide this proof into sections, focusing on each value of $k$ in turn. We prove that the terms of the table corresponding to $h=2$ (given on page 4) must always follow certain simple patterns, which then lead to the closed-form expressions given in Section \ref{sec:3}.

\begin{lemma}
\label{k1}
When $k=1$, we have $f_{O_j,2}(k)=\frac{1}{8}\bigl((-1)^{1+n} - 1 + (-i)^n + i^n + (-1)^n n + n\bigr)$, where $n=j$.
\end{lemma}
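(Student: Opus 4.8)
The plan is to reduce the statement to an elementary counting problem and then verify the stated closed form by evaluating it on residue classes modulo $4$. First I would recall, as observed in the introduction, that $f_{O_j,2}(1) = q_{O_{2j-1},2}(j)$ is the number of partitions of $j$ into two distinct odd parts, each at most $2j-1$. Writing such a partition as $j = a_1 + a_2$ with $1 \le a_1 < a_2$ and both $a_i$ odd, the upper bound $a_i \le 2j-1$ is automatic, since $a_1,a_2 \ge 1$ already forces $a_1,a_2 \le j-1$. So the only genuine constraints are that $a_1,a_2$ are positive odd integers, distinct, with $a_1 + a_2 = j$.

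A sum of two odd integers is even, so when $j$ is odd there are no such partitions and $f_{O_j,2}(1)=0$; this already matches the odd-$j$ rows of the $h=2$ table on page 4. When $j$ is even, $a_2 = j - a_1$ is odd as soon as $a_1$ is, so the count equals the number of odd integers $a_1$ with $1 \le a_1 < j/2$, the strict inequality encoding $a_1 \ne a_2$ (equality would force $a_1 = j/2$). A short case analysis on $j \bmod 4$ then yields $j/4$ partitions when $j \equiv 0 \pmod{4}$ and $(j-2)/4$ partitions when $j \equiv 2 \pmod{4}$, which one can cross-check against the $k=1$ column of the table.

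It remains to confirm that $g(n) := \frac{1}{8}\bigl((-1)^{1+n} - 1 + (-i)^n + i^n + (-1)^n n + n\bigr)$ reproduces this piecewise description. For this I would substitute $n \equiv 0,1,2,3 \pmod{4}$ in turn, using that $(-1)^n$ cycles through $1,-1,1,-1$ and that $i^n + (-i)^n$ takes the values $2,0,-2,0$ on these classes; in each case $g(n)$ collapses to $n/4$, $0$, $(n-2)/4$, $0$ respectively, matching the four cases above and hence proving the lemma.

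I do not expect a real obstacle here. The only point requiring care is the distinctness condition $a_1 \ne a_2$: it eliminates exactly one candidate, and it does so precisely when $j/2$ is odd, i.e. when $j \equiv 2 \pmod{4}$. This is exactly what distinguishes the two even cases, so the accounting must be done carefully at that step; everything else is routine arithmetic.
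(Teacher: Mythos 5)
Your proof is correct, and it takes a slightly more direct route than the paper's. The paper argues inside the series for $j$: it recalls from Section \ref{sec:4} that the terms at positions $n=1$ and $n=\frac{j}{2}$ sum to one, generates further solutions by shifting the two numerators by $\pm 2$, and appeals to an argument ``nearly identical'' to Lemma \ref{TwoTerms} to rule out any pairs not equidistant from those two positions, arriving at the pattern $\{0,1,0,1,0,2,0,2,\ldots\}$; it then asserts, without computation, that the closed form reproduces this pattern. You instead parameterize all solutions at once: a partition $j=a_1+a_2$ into two distinct odd parts is determined by its smaller part $a_1$, an odd integer with $1\le a_1<\frac{j}{2}$ (the bound $a_i\le 2j-1$ being vacuous), giving $j/4$ solutions when $j\equiv 0\pmod 4$, $(j-2)/4$ when $j\equiv 2\pmod 4$, and $0$ for odd $j$, with no separate completeness lemma required. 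Your explicit evaluation of $\frac{1}{8}\bigl((-1)^{1+n}-1+(-i)^n+i^n+(-1)^n n+n\bigr)$ on the residues of $n$ modulo $4$ also supplies the final verification that the paper leaves implicit. Both arguments rest on the same observation that two odd parts have an even sum; yours buys a self-contained and fully checked proof, while the paper's stays closer to the ``equidistant pairs'' machinery it reuses elsewhere in Section \ref{sec:7}.
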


\begin{proof}
An odd value of $j$ cannot have any combination of two terms that add to $k=1$, as the sum of any two odd numbers is even, and thus unequal to the odd denominator. (Any pairs of numerators chosen must sum to the denominator to produce $k=1$.) This then results in the alternating zeros found with the column for $k=1$. We know that any even value of $j$ has at least one pair of terms that sum to $k=1$, as we proved in Section \ref{sec:4} that the terms given by $n=1$ and $n=\frac{j}{2}$ sum to this value. If two is subtracted from one of the resulting numerators, then added to the other, the sum does not change. The numerators in the series for $j$ are consecutive odd integers, thus they differ from their neighbors by two. We thus find that the number of pairs that can sum to one for an even $j$ is equal to the number of pairs equidistant from $n=1$ and $n=\frac{j}{2}$, as we can move inwards from those terms to produce more combinations that work. This results in the pattern $\{ 0,1,0,1,0,2,0,2,0,3,0,\ldots \}$ seen for $k=1$. 

\noindent Example for $j=8$:
\begin{equation}
8=\frac{1}{8}+\frac{3}{8}+\frac{5}{8}+\frac{7}{8}+\frac{9}{8}+\frac{11}{8}+\frac{13}{8}+\frac{15}{8}
\end{equation}
Pulling out the section of the series from $n=1$ to $n=\frac{j}{2}$ we have:
\begin{equation}
\frac{1}{8}+\frac{3}{8}+\frac{5}{8}+\frac{7}{8}
\end{equation}

We see that there are two pairs equidistant from the ends of this section of the series. Each of these sum to one. 
\begin{equation}
    1=\frac{1}{8}+\frac{7}{8}  
\end{equation}
\begin{equation}
1=\frac{3}{8}+\frac{5}{8}
\end{equation}

The numerator $3=1+2$, and the numerator $5=7-2$. The number of these pairs increases by one for every other even $j$, therefore we get the pattern seen in the table. Through an argument nearly identical to that used to prove Lemma \ref{TwoTerms}, one can see that only pairs equidistant from the terms given by $n=1$ and $n=\frac{j}{2}$ can sum to one.

We thus know that for $k=1$, $f_{O_j,2}(k)$ must always follow the simple pattern seen in the first column of the table. We can see that this pattern is generated by the formula $\frac{1}{8}\bigl((-1)^{1+n} - 1 + (-i)^n + i^n + (-1)^n n + n\bigr)$, hence $f_{O_j,2}(k)$ has this closed-form expression when $k=1$.
\end{proof}

\newpage

\begin{lemma}
\label{TwoTerms}
Only pairs of terms equidistant from the ends of the series for $j$ can sum to two.
\end{lemma}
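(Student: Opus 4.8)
The plan is to show that a pair of numerators $\frac{a}{j}, \frac{b}{j}$ from the series $\sum_{n=1}^j \frac{2n-1}{j}$ sums to $2$ if and only if $a$ and $b$ are equidistant from the two ends of the list $1, 3, 5, \ldots, 2j-1$. One direction is already recorded in the proof of Proposition \ref{prp:4.1}: the pair given by indices $n$ and $j+1-n$ has numerators $2n-1$ and $2(j+1-n)-1 = 2j - (2n-1)$, which sum to $2j$, so $\frac{(2n-1) + (2j-(2n-1))}{j} = \frac{2j}{j} = 2$. So only the converse needs argument.

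For the converse, suppose two terms indexed by $n_1$ and $n_2$ (with $1 \le n_1, n_2 \le j$) satisfy $\frac{2n_1 - 1}{j} + \frac{2n_2 - 1}{j} = 2$. Clearing the denominator gives $(2n_1 - 1) + (2n_2 - 1) = 2j$, i.e. $2(n_1 + n_2) - 2 = 2j$, hence $n_1 + n_2 = j + 1$. That is exactly the statement that the index $n_2$ is the mirror image $j + 1 - n_1$ of the index $n_1$, so the two terms sit at equal distances from the two ends of the series. Thus no pair other than these mirror-image pairs can sum to $2$, which is the claim. I would also note in passing that when $j$ is odd the middle index $n = \frac{j+1}{2}$ is its own mirror image, and the corresponding ``pair'' is the single term $\frac{j}{j} = 1$ counted with multiplicity two — but since we are speaking of pairs of distinct positions this degenerate case causes no trouble, and it can be mentioned parenthetically to match the style of Section \ref{sec:4}.

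The argument is essentially a one-line linear computation, so there is no real obstacle; the only thing to be careful about is bookkeeping on the index ranges (ensuring $1 \le j+1 - n_1 \le j$, which holds automatically since $1 \le n_1 \le j$) and being explicit that ``equidistant from the ends'' is synonymous with the index relation $n_1 + n_2 = j+1$. This lemma is then exactly the tool invoked in Lemma \ref{k1} and in the forthcoming analysis of the $k=2$ column, where one concludes that the count $f_{O_j,2}(2)$ equals the number of such mirror-image pairs, namely $\lfloor j/2 \rfloor$, from which the closed form $\frac{1}{4}\bigl((-1)^n + 2n - 1\bigr)$ with $n = j$ follows by separating the parities of $j$.
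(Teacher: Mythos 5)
Your proposal is correct and rests on the same core observation as the paper's proof: two terms $\frac{2n_1-1}{j}+\frac{2n_2-1}{j}$ sum to $2$ exactly when the numerators sum to $2j$, equivalently $n_1+n_2=j+1$, which is the mirror-image (equidistant) condition. The paper packages this as a shift argument (any pair $c=a+Q$, $d=b-Q$ with the same sum as an equidistant pair $a,b$ is again equidistant), while you solve the linear equation directly, but the mathematical content is the same and your parenthetical treatment of the degenerate middle-term case when $j$ is odd is a harmless refinement.
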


\begin{proof}
Suppose there are two positive odd integers, $a$ and $b$, where 
\begin{equation}
a+b=kj 
\end{equation}
\begin{equation}
a<b\leq(2j-1)
\end{equation}

This thus requires $a$ and $b$ to be numerators within the series for $j$, where the fractions in question sum to $k$, as the denominators are $j$. 
Suppose there are two other positive odd integers, $c$ and $d$, where:
\begin{equation}
c+d=kj 
\end{equation}
\begin{equation}
c<d<(2j-1)
\end{equation}
For $c+d$ to equal $a+b$, there must exist an integer $Q$ such that: 
\begin{equation}
c=a+Q  
\end{equation}
\begin{equation}
d=b-Q
\end{equation}

If we begin with a pair of integers, $a$ and $b$, where $a$ and $b$ are numerators equidistant from the ends of the series for $j$, we know they sum to $2j$, as we proved in Section \ref{sec:4} that the respective fractions these numerators are a part of sum to two. Any other pair, $c$ and $d$, where $c=a+Q$ and $d=b-Q$ must also be equidistant from the ends of the series for $j$. We then know that such pairs are the only pairs that can sum to $kj$, where $k=2$. 
\end{proof}

\begin{lemma}
\label{k2}
When $k=2$ we have $f_{O_j,2}(k)=\frac{1}{4}\bigl((-1)^n + 2n - 1 \bigr)$, where $n=j$.
\end{lemma}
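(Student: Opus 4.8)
The plan is to count directly the pairs of distinct odd numerators that produce $k=2$, and then check that the resulting count agrees with the stated closed form. Since the common denominator is $j$, a solution with $h=2$ and $k=2$ is exactly an unordered pair $\{a,b\}$ with $a\neq b$, $a,b\in\{1,3,5,\ldots,2j-1\}$, and $a+b=2j$. By Lemma \ref{TwoTerms}, every pair of numerators summing to $2j$ must consist of two terms equidistant from the two ends of the series for $j$, and conversely each such equidistant pair sums to $2j$ (this was established in Section \ref{sec:4}). So the task reduces to counting equidistant pairs, with the single caveat — forced by the distinctness of the two parts — that a numerator may not be paired with itself.

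First I would split on the parity of $j$. When $j$ is even, the list $1,3,\ldots,2j-1$ has an even number of terms, so all $j$ numerators pair off into $j/2$ equidistant pairs (the innermost being $j-1$ and $j+1$), each summing to $2j$, with no term ever being its own partner; hence $f_{O_j,2}(2)=j/2$. When $j$ is odd, the list has an odd number of terms; the central term, at $n=\frac{j+1}{2}$, equals $j$, and although $j+j=2j$ this case is disallowed because the two parts must be distinct. The remaining $j-1$ terms pair off into $\frac{j-1}{2}$ valid equidistant pairs, so $f_{O_j,2}(2)=\frac{j-1}{2}$.

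Then I would confirm that the single formula $\frac14\bigl((-1)^n+2n-1\bigr)$ with $n=j$ reproduces both cases: for even $j$ it gives $\frac14(1+2j-1)=j/2$, and for odd $j$ it gives $\frac14(-1+2j-1)=\frac{j-1}{2}$, matching the case analysis. As a cross-check one can note that this count satisfies the recurrence $f_{O_j,2}(2)=1+f_{O_{j-2},2}(2)$ quoted in Section \ref{sec:3}: shifting each numerator in a pair summing to $2(j-2)$ up by $2$ injects those pairs into the pairs summing to $2j$, and the only pair omitted from the image is $\{1,2j-1\}$; together with the base values $f_{O_3,2}(2)=1$ and $f_{O_4,2}(2)=2$ this gives the same two arithmetic progressions.

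I do not expect a real obstacle here; the argument is short once Lemma \ref{TwoTerms} is in hand. The only points requiring care are (i) applying Lemma \ref{TwoTerms} with the correct target sum $2j$ (the numerator sum) rather than $2$ (the value of $k$), and (ii) remembering to discard the central self-pairing when $j$ is odd, which is precisely what produces the alternating $(-1)^n$ term in the formula.
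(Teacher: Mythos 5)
Your proposal is correct and follows essentially the same route as the paper: invoke Lemma \ref{TwoTerms} together with the Section \ref{sec:4} fact that equidistant pairs sum to two, count $\frac{j}{2}$ pairs for even $j$ and $\frac{j-1}{2}$ for odd $j$, and check that $\frac{1}{4}\bigl((-1)^n+2n-1\bigr)$ reproduces both cases. Your explicit exclusion of the central self-pairing when $j$ is odd and the recurrence cross-check are slightly more careful than the paper's wording, but the argument is the same.
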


\begin{proof}

We proved in Section \ref{sec:4} that pairs of terms equidistant from the ends of the series for $j$ always sum to two. By Lemma \ref{TwoTerms}, we also know that such pairs are the only pairs that can sum to two. When $j$ is even, there are $\frac{j}{2}$ pairs equidistant from the ends of the series for $j$, though when $j$ is odd we can find $\frac{(j-1)}{2}$ equidistant pairs. We then see that an even $j$ and the following odd $j$ have the same number of pairs that sum to two. This produces the pattern seen where each number of combinations is displayed twice. 

We thus know that for $k=2$, $f_{O_j,2}(k)$ always follows the pattern $\{ 1,2,2,3,3,4,4,\ldots \}$ seen in the second column of the table. We can see that this pattern is generated by the formula $f_{O_j,2}(k)=\frac{1}{4}\bigl((-1)^n + 2n - 1 \bigr)$, hence $f_{O_j,2}(k)$ has this closed-form expression when $k=2$.
\end{proof}

\begin{lemma}
\label{k3}
When $k=3$, we have $f_{O_j,2}(k)=\frac{1}{8}\bigl((-1)^{1+n} - 1 + (-i)^n + i^n + (-1)^n n + n\bigr)$, where $n=j$.
\end{lemma}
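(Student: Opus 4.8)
The plan is to observe that the closed form asserted here for $k=3$ is word-for-word the same expression that Lemma \ref{k1} established for $k=1$, so it suffices to prove the identity $f_{O_j,2}(3)=f_{O_j,2}(1)$ and then quote Lemma \ref{k1}. This identity is an immediate instance of the symmetry proved in Lemma \ref{symmetric}: the coefficients of $r_h(x)$ are symmetric about the coefficient corresponding to $k=h$, i.e.\ $f_{O_j,h}(k)=f_{O_j,h}(2h-k)$ for every $k$. Setting $h=2$ and $k=1$ gives $f_{O_j,2}(1)=f_{O_j,2}(3)$, and combining this with Lemma \ref{k1} yields the stated formula. The reflection bijection underlying Lemma \ref{symmetric} — send a numerator at distance $d$ from the left end of the series for $j$ to the numerator at distance $d$ from the right end — carries a $2$-term combination summing to $k=1$ to a $2$-term combination summing to $k=3$ and back, so the counts agree even in the cases where both are zero.

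For completeness I would also sketch a direct count. A solution with $k=3$ is a pair of distinct odd numerators $a<b$ from $\{1,3,\dots,2j-1\}$ with $a+b=3j$. If $j$ is odd then $3j$ is odd while $a+b$ is even, so there are no solutions; this accounts for the alternating zeros in the $k=3$ column. If $j$ is even, then $b=3j-a\le 2j-1$ forces $a\ge j+1$, while $a<b$ forces $a<\tfrac{3j}{2}$, and every odd $a$ in the window $[\,j+1,\tfrac{3j}{2})$ does give a valid pair; hence $f_{O_j,2}(3)$ equals the number of odd integers in that window. A short computation, splitting on $j\equiv 0$ or $j\equiv 2\pmod 4$, shows this number is $j/4$ or $(j-2)/4$ respectively, reproducing the sequence $\{0,1,0,1,0,2,0,2,0,3,\dots\}$ already seen for $k=1$; checking that $\tfrac{1}{8}\bigl((-1)^{1+n}-1+(-i)^n+i^n+(-1)^n n+n\bigr)$ evaluates to $0$, $n/4$, $(n-2)/4$ according to $n\bmod 4$ then closes the argument.

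The proof has essentially no hard step: the symmetry route is a single line once Lemmas \ref{symmetric} and \ref{k1} are in hand, and the only thing to watch in the direct route is the endpoint bookkeeping in the window $[\,j+1,\tfrac{3j}{2})$ together with the parity case split, both entirely routine. The only real "obstacle" is cosmetic — recognizing that the identical closed-form expression governs both the $k=1$ and $k=3$ columns, which is exactly the content of the symmetry of $r_2(x)$ about $k=2$.
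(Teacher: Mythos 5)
Your main argument is exactly the paper's proof: invoke the symmetry of Lemma \ref{symmetric} with $h=2$ to get $f_{O_j,2}(3)=f_{O_j,2}(1)$ and then quote the closed form from Lemma \ref{k1}. The supplementary direct count of odd numerators $a$ in $[\,j+1,\tfrac{3j}{2})$ is correct but not needed; the proposal is right and matches the paper's approach.
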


\begin{proof}

By Lemma \ref{symmetric}, symmetry implies that $f_{O_j,2}(1)=f_{O_j,2}(3)$.
\end{proof}

\newpage

\begin{lemma}
\label{kgreaterthan3}
If $k>3$, $f_{O_j,2}(k)=0$.
\end{lemma}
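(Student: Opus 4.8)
The plan is to bound the largest value that a sum of two admissible numerators can take and to observe that for $k>3$ this maximum is already smaller than $kj$. Recall from the discussion preceding the $h=2$ table that $f_{O_j,2}(k)=q_{O_{2j-1},2}(kj)$, i.e. $f_{O_j,2}(k)$ counts the partitions of $kj$ into exactly two distinct odd parts, each no larger than $2j-1$. Equivalently, it counts the pairs of numerators $a<b$ drawn from $\{1,3,5,\ldots,2j-1\}$ with $a+b=kj$; each such pair gives the partition $k=\frac{a}{j}+\frac{b}{j}$.

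First I would note that the two largest distinct elements of $\{1,3,5,\ldots,2j-1\}$ are $2j-3$ and $2j-1$, so any sum of two distinct numerators from this set satisfies $a+b\le(2j-3)+(2j-1)=4j-4$. Next, since $k$ is a positive integer, $k>3$ means $k\ge 4$, and then $kj\ge 4j>4j-4$. Hence no pair $a<b$ of numerators can satisfy $a+b=kj$, there are no partitions of $kj$ of the required form, and therefore $f_{O_j,2}(k)=0$ whenever $k>3$. One could equivalently phrase this via the polynomial $r_2(x)$: the generating product $\prod_{n=1}^{j}(1+yx^{2n-1})$ has $x$-degree $j^2$, but the terms with $y^2$ have $x$-degree at most $4j-4$, so the coefficient of $x^{kj}$ in $r_2(x)$ vanishes once $kj>4j-4$.

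The argument is elementary and I do not anticipate any real obstacle; the only subtlety is that the two summands must be \emph{distinct}, which is why the relevant bound is $(2j-3)+(2j-1)$, although even the cruder estimate $a+b\le 2(2j-1)=4j-2<4j$ already rules out $k\ge 4$. Combining this lemma with Lemmas \ref{k1}, \ref{k2}, and \ref{k3} then accounts for every column of the $h=2$ table and yields the closed-form expressions recorded in Section \ref{sec:3}.
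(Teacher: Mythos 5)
Your argument is correct: the largest possible sum of two distinct numerators from $\{1,3,\ldots,2j-1\}$ is $(2j-3)+(2j-1)=4j-4$, and $kj\ge 4j>4j-4$ once the integer $k$ exceeds $3$, so no pair can reach $kj$ and the coefficient vanishes. The paper reaches the same conclusion by a bound of the same nature, but it proves a strictly more general statement: for \emph{any} number of parts $h>1$, no combination of $h$ terms from the series for $j$ can produce a value $k\ge 2h$. It does this by writing $n_i=j-a_i$ with the $a_i$ distinct nonnegative integers (at most one zero), so the attainable total $hj-\sum a_i$ is strictly below $hj$, while a target $k\ge 2h$ forces a total of at least $\bigl(h+\tfrac{q}{2}\bigr)j+\tfrac{h}{2}>hj$; the case $h=2$, $k>3$ is then read off as a corollary. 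Your specialization to $h=2$ is shorter and more transparent, and the equivalent phrasing through the $y^2$-part of $\prod_{n=1}^{j}(1+yx^{2n-1})$ is a nice sanity check; what the paper's version buys is the general inequality $k<2h$, which is reused verbatim later in Proposition \ref{prp:10.1} to describe when the associated Diophantine equation can have solutions, so if you only proved the $h=2$ case you would need to redo the general bound there.
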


\begin{proof}

Suppose it is possible to find $h$ terms from the series for $j$ that sum to a $k$-value greater than or equal to $2h$, where $h>1$. This then implies the following statement, where $q \geq 0$. 
\begin{equation}
\left( \frac{2n_1 - 1}{j} + \frac{2n_2 - 1}{j} + \cdots + \frac{2n_{h-1} - 1}{j} + \frac{2n_h - 1}{j} \right)=(2h+q)
\end{equation}
\begin{equation}
    2\bigl(n_1 + n_2 + \cdots + n_{h-1} + n_h\bigr)=(2h+q)j+h
\end{equation}
We know that $n_1$, $n_2$, \ldots , $n_{h-1}$, $n_h$ are all positive integers less than or equal to $j$, where at most one is equal to $j$, thus:
\begin{equation}
n_1=j-a_1, \ n_2=j-a_2, \ \cdots n_{h-1}=j-a_{h-1}, \ n_h=j-a_h
\end{equation}

The variables $a_1$, $a_2$, \ldots, \ $a_{h-1}$, $a_h$ are positive integers with at most one equaling zero.
\begin{center}
$a_1, a_2, \cdots ,a_{h-1},$ and  $a_h$ are distinct.
\end{center}
These constants are distinct to prevent any term from being repeated. No term can be repeated because no term in the series for $j$ is repeated, thus no portion of the larger series can contain repeated terms. 
Summing the $n$-values, we then find
\begin{equation}
    hj-\bigl(a_1 + a_2 + \cdots + a_{h-1} + a_h\bigr)=\left( h+\frac{q}{2} \right)j+\frac{1}{2}h
\end{equation}
\begin{equation}
h>1 \ \therefore \ \bigl(a_1 + a_2 + \cdots + a_{h-1} + a_h\bigr)>0
\end{equation}
\begin{equation}
    hj-\bigl(a_1 + a_2 + \cdots + a_{h-1} + a_h\bigr)<hj
\end{equation}
\begin{equation}
    \left( \left(h+\frac{q}{2} \right)j + \frac{1}{2}h \right)>hj
\end{equation}
\begin{equation}
    hj-\bigl(a_1 + a_2 + \cdots + a_{h-1} + a_h\bigr) \neq \left( \left(h+\frac{q}{2} \right)j + \frac{1}{2}h \right)
\end{equation}
\begin{equation}
\left(\frac{2n_1 - 1}{j} + \frac{2n_2 - 1}{j} + \cdots + \frac{2n_{h-1} - 1}{j} + \frac{2n_h - 1}{j} \right) \neq (2h+q)
\end{equation}

We can conclude that if combinations are restricted to $h$ terms, where $h>1$, then it is impossible to find any combination for a value of $k$ greater than or equal to $2h$. If $k>3$ where $h=2$, then $k \geq 2h$, thus there cannot be any combination of $2$ terms for $k > 3$. We have thus proven that the case of $h=2$ must always follow the simple patterns visible in its number table. These patterns can then be produced by the closed-form expressions we give.
\end{proof}

We now see that we can combine these lemmas to prove our result.

\begin{proof}
Combining Lemma \ref{k1}, Lemma \ref{TwoTerms}, Lemma \ref{k2}, Lemma \ref{k3}, and Lemma \ref{kgreaterthan3} we see that $f_{O_j,2}(k)$ follows the closed-form expressions given in Section \ref{sec:3}.
\end{proof}

\section{More examples of number tables}
\label{sec:8}

To generate the number table for $r(x)$, the MATLAB script prompts the user for the maximum $j$-value, then creates a counter to keep track of what row it is on, where each row corresponds to a different $j$-value. The script finds all $k$-values for each $j$-value, then loads these into a vector. It finds the first $j$ odd integers, then applies the built-in nchoosek function to find every possible combination of $q$ integers from set of the first $j$ consecutive odd numbers, where $q<j$. The script loads the combinations into a matrix. We repeat this process for every $q$ smaller than $j$, where the script checks each combination of $q$ odd integers to see if they sum to the product of $k$ and $j$. This is because the first $j$ consecutive odd integers are numerators from the series for $j$, thus if the combination is valid, it will sum to the product of $k$ and $j$. Each time the script finds a combination that works, it increases a counter by one, then prints the total number of combinations for each $k$-value in the appropriate position in a number table. All loops are while loops. We apply a similar process to produce tables for $r_h(x)$, except the user is asked to input $h$, where $h$ is the fixed number of terms per partition. The program then only allows values of $q$ equal to this number. 

Note that in the tables below, nonzero terms are written in bold to make the patterns easier to read. Values for $j$ are given in the column on the far left; values for $k$ are given across the first row. 

\subsection{Restriction to three terms:}
\begin{table}[h!]
  \centering
    \begin{tabular}{ |c c|c|c|c|c|c|c|c|c|c|c|c|c|c|c|}
    \hline
    \multicolumn{16}{|c|}{Values for $f_{O_j,3}(k)$} \\
    \hline
    $j$ $\hspace{0.2 cm} | $& $k$ & $1$ & $2$ & $3$ & $4$ & $5$ & $6$ & $7$ & $8$ & $9$ & $10$ & $11$ & $12$ & $13$ & $14$\\
    \hline\hline
    3 & $ $ & 0 & 0 & 0 & 0 & 0 & 0 & 0 & 0 & 0 & 0 & 0 & 0 & 0 & 0\\
    
    4 & $ $ & 0 & 0 & 0 & 0 & 0 & 0 & 0 & 0 & 0 & 0 & 0 & 0 & 0 & 0\\
    
    5 & $ $ & 0 & 0 & \textbf{2} & 0 & 0 & 0 & 0 & 0 & 0 & 0 & 0 & 0 & 0 & 0\\
    
    6 & $ $ & 0 & 0 & 0 & 0 & 0 & 0 & 0 & 0 & 0 & 0 & 0 & 0 & 0 & 0\\
    
    7 & $ $ & 0 & 0 & \textbf{5} & 0 & 0 & 0 & 0 & 0 & 0 & 0 & 0 & 0 & 0 & 0\\
    
    8 & $ $ & 0 & 0 & 0 & 0 & 0 & 0 & 0 & 0 & 0 & 0 & 0 & 0 & 0 & 0\\
   
    9 & $ $ & \textbf{1} & 0 & \textbf{8} & 0 & \textbf{1} & 0 & 0 & 0 & 0 & 0 & 0 & 0 & 0 & 0\\
  
    10 & $ $ & 0 & 0 & 0 & 0 & 0 & 0 & 0 & 0 & 0 & 0 & 0 & 0 & 0 & 0\\
    
    11 & $ $ & \textbf{1} & 0 &\textbf{13} & 0 & \textbf{1} & 0 & 0 & 0 & 0 & 0 & 0 & 0 & 0 & 0\\
    
    12 & $ $ & 0 & 0 & 0 & 0 & 0 & 0 & 0 & 0 & 0 & 0 & 0 & 0 & 0 & 0\\
   
    13 & $ $ & \textbf{2} & 0 & \textbf{18} & 0 & \textbf{2} & 0 & 0 & 0 & 0 & 0 & 0 & 0 & 0 & 0\\
    
    14 & $ $ & 0 & 0 & 0 & 0 & 0 & 0 & 0 & 0 & 0 & 0 & 0 & 0 & 0 & 0\\
    
    15 & $ $ & \textbf{3} & 0 & \textbf{25} & 0 & \textbf{3} & 0 & 0 & 0 & 0 & 0 & 0 & 0 & 0 & 0\\
    \hline
    \end{tabular}
\end{table}

\newpage

\subsection{Restriction to four terms:}
\begin{table}[h!]
  \centering
    \begin{tabular}{ |c c|c|c|c|c|c|c|c|c|c|c|c|c|c|c|}
    \hline
    \multicolumn{16}{|c|}{Values for $f_{O_j,4}(k)$} \\
    \hline
    $j$ $\hspace{0.2 cm} | $& $k$ & $1$ & $2$ & $3$ & $4$ & $5$ & $6$ & $7$ & $8$ & $9$ & $10$ & $11$ & $12$ & $13$ & $14$\\
    \hline\hline
    3 & $ $ & 0 & 0 & 0 & 0 & 0 & 0 & 0 & 0 & 0 & 0 & 0 & 0 & 0 & 0\\
    
    4 & $ $ & 0 & 0 & 0 & 0 & 0 & 0 & 0 & 0 & 0 & 0 & 0 & 0 & 0 & 0\\
    
    5 & $ $ & 0 & 0 & 0 & \textbf{1} & 0 & 0 & 0 & 0 & 0 & 0 & 0 & 0 & 0 & 0\\
    
    6 & $ $ & 0 & 0 & \textbf{1} & \textbf{3} & \textbf{1} & 0 & 0 & 0 & 0 & 0 & 0 & 0 & 0 & 0\\
    
    7 & $ $ & 0 & 0 & 0 & \textbf{5} & 0 & 0 & 0 & 0 & 0 & 0 & 0 & 0 & 0 & 0\\
    
    8 & $ $ & 0 & \textbf{1} & \textbf{5} & \textbf{8} & \textbf{5} & \textbf{1} & 0 & 0 & 0 & 0 & 0 & 0 & 0 & 0\\
   
    9 & $ $ & 0 & \textbf{1} & 0 & \textbf{12} & 0 & \textbf{1} & 0 & 0 & 0 & 0 & 0 & 0 & 0 & 0\\
  
    10 & $ $ & 0 & \textbf{2} & \textbf{10} & \textbf{18} & \textbf{10} & \textbf{2} & 0 & 0 & 0 & 0 & 0 & 0 & 0 & 0\\
    
    11 & $ $ & 0 & \textbf{3} & 0 & \textbf{24} & 0 & \textbf{3} & 0 & 0 & 0 & 0 & 0 & 0 & 0 & 0\\
    
    12 & $ $ & 0 & \textbf{5} & \textbf{21} & \textbf{33} & \textbf{21} & \textbf{5} & 0 & 0 & 0 & 0 & 0 & 0 & 0 & 0\\
   
    13 & $ $ & 0 & \textbf{6} & 0 & \textbf{43} & 0 & \textbf{6} & 0 & 0 & 0 & 0 & 0 & 0 & 0 & 0\\
    
    14 & $ $ & 0 & \textbf{9} & \textbf{35} & \textbf{55} & \textbf{35} & \textbf{9} & 0 & 0 & 0 & 0 & 0 & 0 & 0 & 0\\
    
    15 & $ $ & 0 & \textbf{11} & 0 & \textbf{69} & 0 & \textbf{11} & 0 & 0 & 0 & 0 & 0 & 0 & 0 & 0\\
    \hline
    \end{tabular}
\end{table}

\subsection{Restriction to five terms:}
\begin{table}[h!]
  \centering
    \begin{tabular}{ |c c|c|c|c|c|c|c|c|c|c|c|c|c|c|c|}
    \hline
    \multicolumn{16}{|c|}{Values for $f_{O_j,5}(k)$} \\
    \hline
    $j$ $\hspace{0.2 cm} | $& $k$ & $1$ & $2$ & $3$ & $4$ & $5$ & $6$ & $7$ & $8$ & $9$ & $10$ & $11$ & $12$ & $13$ & $14$\\
    \hline\hline
    3 & $ $ & 0 & 0 & 0 & 0 & 0 & 0 & 0 & 0 & 0 & 0 & 0 & 0 & 0 & 0\\
    
    4 & $ $ & 0 & 0 & 0 & 0 & 0 & 0 & 0 & 0 & 0 & 0 & 0 & 0 & 0 & 0\\
    
    5 & $ $ & 0 & 0 & 0 & 0 & 0 & 0 & 0 & 0 & 0 & 0 & 0 & 0 & 0 & 0\\
    
    6 & $ $ & 0 & 0 & 0 & 0 & 0 & 0 & 0 & 0 & 0 & 0 & 0 & 0 & 0 & 0\\
    
    7 & $ $ & 0 & 0 & 0 & 0 & \textbf{3} & 0 & 0 & 0 & 0 & 0 & 0 & 0 & 0 & 0\\
    
    8 & $ $ & 0 & 0 & 0 & 0 & 0 & 0 & 0 & 0 & 0 & 0 & 0 & 0 & 0 & 0\\
   
    9 & $ $ & 0 & 0 & \textbf{1} & 0 & \textbf{12} & 0 & \textbf{1} & 0 & 0 & 0 & 0 & 0 & 0 & 0\\
  
    10 & $ $ & 0 & 0 & 0 & 0 & 0 & 0 & 0 & 0 & 0 & 0 & 0 & 0 & 0 & 0\\
    
    11 & $ $ & 0 & 0 & \textbf{5} & 0 & \textbf{32} & 0 & \textbf{5} & 0 & 0 & 0 & 0 & 0 & 0 & 0\\
    
    12 & $ $ & 0 & 0 & 0 & 0 & 0 & 0 & 0 & 0 & 0 & 0 & 0 & 0 & 0 & 0\\
   
    13 & $ $ & 0 & 0 & \textbf{13} & 0 & \textbf{73} & 0 & \textbf{13} & 0 & 0 & 0 & 0 & 0 & 0 & 0\\
    
    14 & $ $ & 0 & 0 & 0 & 0 & 0 & 0 & 0 & 0 & 0 & 0 & 0 & 0 & 0 & 0\\
    
    15 & $ $ & 0 & 0 & \textbf{30} & 0 & \textbf{141} & 0 & \textbf{30} & 0 & 0 & 0 & 0 & 0 & 0 & 0\\
    \hline
    \end{tabular}
\end{table}

\newpage

\subsection{Restriction to six terms:}
\begin{table}[h!]
  \centering
    \begin{tabular}{ |c c|c|c|c|c|c|c|c|c|c|c|c|c|c|c|}
    \hline
    \multicolumn{16}{|c|}{Values for $f_{O_j,6}(k)$} \\
    \hline
    $j$ $\hspace{0.2 cm} | $& $k$ & $1$ & $2$ & $3$ & $4$ & $5$ & $6$ & $7$ & $8$ & $9$ & $10$ & $11$ & $12$ & $13$ & $14$\\
    \hline\hline
    3 & $ $ & 0 & 0 & 0 & 0 & 0 & 0 & 0 & 0 & 0 & 0 & 0 & 0 & 0 & 0\\
    
    4 & $ $ & 0 & 0 & 0 & 0 & 0 & 0 & 0 & 0 & 0 & 0 & 0 & 0 & 0 & 0\\
    
    5 & $ $ & 0 & 0 & 0 & 0 & 0 & 0 & 0 & 0 & 0 & 0 & 0 & 0 & 0 & 0\\
    
    6 & $ $ & 0 & 0 & 0 & 0 & 0 & 0 & 0 & 0 & 0 & 0 & 0 & 0 & 0 & 0\\
    
    7 & $ $ & 0 & 0 & 0 & 0 & 0 & \textbf{1} & 0 & 0 & 0 & 0 & 0 & 0 & 0 & 0\\
    
    8 & $ $ & 0 & 0 & 0 & 0 & \textbf{2} & \textbf{4} & \textbf{2} & 0 & 0 & 0 & 0 & 0 & 0 & 0\\
   
    9 & $ $ & 0 & 0 & 0 & \textbf{1} & 0 & \textbf{8} & 0 & \textbf{1} & 0 & 0 & 0 & 0 & 0 & 0\\
  
    10 & $ $ & 0 & 0 & 0 & \textbf{2} & \textbf{10} & \textbf{18} & \textbf{10} & \textbf{2} & 0 & 0 & 0 & 0 & 0 & 0\\
    
    11 & $ $ & 0 & 0 & 0 & \textbf{5} & 0 & \textbf{32} & 0 & \textbf{5} & 0 & 0 & 0 & 0 & 0 & 0\\
    
    12 & $ $ & 0 & 0 & \textbf{1} & \textbf{11} & \textbf{39} & \textbf{58} & \textbf{39} & \textbf{11} & \textbf{1} & 0 & 0 & 0 & 0 & 0\\
   
    13 & $ $ & 0 & 0 & 0 & \textbf{19} & 0 & \textbf{94} & 0 & \textbf{19} & 0 & 0 & 0 & 0 & 0 & 0\\
    
    14 & $ $ & 0 & 0 & \textbf{3} & \textbf{33} & \textbf{103} & \textbf{151} & \textbf{103} & \textbf{33} & \textbf{3} & 0 & 0 & 0 & 0 & 0\\
    
    15 & $ $ & 0 & 0 & 0 & \textbf{54} & 0 & \textbf{227} & 0 & \textbf{54} & 0 & 0 & 0 & 0 & 0 & 0\\
    \hline
    \end{tabular}
\end{table}

\subsection{Restriction to seven terms:}
\begin{table}[h!]
  \centering
    \begin{tabular}{ |c c|c|c|c|c|c|c|c|c|c|c|c|c|c|c|}
    \hline
    \multicolumn{16}{|c|}{Values for $f_{O_j,7}(k)$} \\
    \hline
    $j$ $\hspace{0.2 cm} | $& $k$ & $1$ & $2$ & $3$ & $4$ & $5$ & $6$ & $7$ & $8$ & $9$ & $10$ & $11$ & $12$ & $13$ & $14$\\
    \hline\hline
    3 & $ $ & 0 & 0 & 0 & 0 & 0 & 0 & 0 & 0 & 0 & 0 & 0 & 0 & 0 & 0\\
    
    4 & $ $ & 0 & 0 & 0 & 0 & 0 & 0 & 0 & 0 & 0 & 0 & 0 & 0 & 0 & 0\\
    
    5 & $ $ & 0 & 0 & 0 & 0 & 0 & 0 & 0 & 0 & 0 & 0 & 0 & 0 & 0 & 0\\
    
    6 & $ $ & 0 & 0 & 0 & 0 & 0 & 0 & 0 & 0 & 0 & 0 & 0 & 0 & 0 & 0\\
    
    7 & $ $ & 0 & 0 & 0 & 0 & 0 & 0 & 0 & 0 & 0 & 0 & 0 & 0 & 0 & 0\\
    
    8 & $ $ & 0 & 0 & 0 & 0 & 0 & 0 & 0 & 0 & 0 & 0 & 0 & 0 & 0 & 0\\
   
    9 & $ $ & 0 & 0 & 0 & 0 & 0 & 0 & \textbf{4} & 0 & 0 & 0 & 0 & 0 & 0 & 0\\
  
    10 & $ $ & 0 & 0 & 0 & 0 & 0 & 0 & 0 & 0 & 0 & 0 & 0 & 0 & 0 & 0\\
    
    11 & $ $ & 0 & 0 & 0 & 0 & \textbf{3} & 0 & \textbf{24} & 0 & \textbf{3} & 0 & 0 & 0 & 0 & 0\\
    
    12 & $ $ & 0 & 0 & 0 & 0 & 0 & 0 & 0 & 0 & 0 & 0 & 0 & 0 & 0 & 0\\
   
    13 & $ $ & 0 & 0 & 0 & 0 & \textbf{19} & 0 & \textbf{94} & 0 & \textbf{19} & 0 & 0 & 0 & 0 & 0\\
    
    14 & $ $ & 0 & 0 & 0 & 0 & 0 & 0 & 0 & 0 & 0 & 0 & 0 & 0 & 0 & 0\\
    
    15 & $ $ & 0 & 0 & 0 & 0 & \textbf{70} & 0 & \textbf{289} & 0 & \textbf{70} & 0 & 0 & 0 & 0 & 0\\
    \hline
    \end{tabular}
\end{table}

\newpage

\subsection{Restriction to eight terms:}
\begin{table}[h!]
  \centering
    \begin{tabular}{ |c c|c|c|c|c|c|c|c|c|c|c|c|c|c|c|}
    \hline
    \multicolumn{16}{|c|}{Values for $f_{O_j,8}(k)$} \\
    \hline
    $j$ $\hspace{0.2 cm} | $& $k$ & $1$ & $2$ & $3$ & $4$ & $5$ & $6$ & $7$ & $8$ & $9$ & $10$ & $11$ & $12$ & $13$ & $14$\\
    \hline\hline
    3 & $ $ & 0 & 0 & 0 & 0 & 0 & 0 & 0 & 0 & 0 & 0 & 0 & 0 & 0 & 0\\
    
    4 & $ $ & 0 & 0 & 0 & 0 & 0 & 0 & 0 & 0 & 0 & 0 & 0 & 0 & 0 & 0\\
    
    5 & $ $ & 0 & 0 & 0 & 0 & 0 & 0 & 0 & 0 & 0 & 0 & 0 & 0 & 0 & 0\\
    
    6 & $ $ & 0 & 0 & 0 & 0 & 0 & 0 & 0 & 0 & 0 & 0 & 0 & 0 & 0 & 0\\
    
    7 & $ $ & 0 & 0 & 0 & 0 & 0 & 0 & 0 & 0 & 0 & 0 & 0 & 0 & 0 & 0\\
    
    8 & $ $ & 0 & 0 & 0 & 0 & 0 & 0 & 0 & 0 & 0 & 0 & 0 & 0 & 0 & 0\\
   
    9 & $ $ & 0 & 0 & 0 & 0 & 0 & 0 & 0 & \textbf{1} & 0 & 0 & 0 & 0 & 0 & 0\\
  
    10 & $ $ & 0 & 0 & 0 & 0 & 0 & 0 & \textbf{2} & \textbf{5} & \textbf{2} & 0 & 0 & 0 & 0 & 0\\
    
    11 & $ $ & 0 & 0 & 0 & 0 & 0 & \textbf{1} & 0 & \textbf{13} & 0 & \textbf{1} & 0 & 0 & 0 & 0\\
    
    12 & $ $ & 0 & 0 & 0 & 0 & 0 & \textbf{5} & \textbf{21} & \textbf{33} & \textbf{21} & \textbf{5} & 0 & 0 & 0 & 0\\
   
    13 & $ $ & 0 & 0 & 0 & 0 & 0 & \textbf{13} & 0 & \textbf{73} & 0 & \textbf{13} & 0 & 0 & 0 & 0\\
    
    14 & $ $ & 0 & 0 & 0 & 0 & \textbf{3} & \textbf{33} & \textbf{103} & \textbf{151} & \textbf{103} & \textbf{33} & \textbf{3} & 0 & 0 & 0\\
    
    15 & $ $ & 0 & 0 & 0 & 0 & 0 & \textbf{70} & 0 & \textbf{289} & 0 & \textbf{70} & 0 & 0 & 0 & 0\\
    \hline
    \end{tabular}
\end{table}

Upon inspection, we see that sequences such as the third column for restriction to four terms and the fifth column for restriction to five terms are not listed in the OEIS. 

\section{Further computational results:}
\label{sec:9}
\subsection{Partitioning series with even numerators into integers}

Using a MATLAB script, we studied the series corresponding to integral points on the surface $t=\frac{x^2 +x}{y}$ to find what integers said series can be partitioned into. Not all can be partitioned into every integer smaller than $t$, though this is frequently the case, especially for longer series. Often there are multiple combinations for a given integer. We include the results for only three series below, as most others would require too much space to be practical to show. 

\noindent Series:
\begin{equation*}
    6=\frac{2}{2}+\frac{4}{2}+\frac{6}{2}
\end{equation*}
Partitions:
\begin{displaymath}
1=\frac{2}{2} \hspace{0.5cm} 2=\frac{4}{2} \hspace{0.5cm} 3=\frac{6}{2} \hspace{0.5cm}  3=\frac{2}{2}+\frac{4}{2} \hspace{0.5cm} 4=\frac{2}{2}+\frac{6}{2} \hspace{0.5cm} 5=\frac{4}{2}+\frac{6}{2}
\end{displaymath}
Series:
\begin{equation*}
10=\frac{2}{2}+\frac{4}{2}+\frac{6}{2}+\frac{8}{2}   
\end{equation*}
Partitions:
\begin{displaymath}
1=\frac{2}{2} \hspace{0.5cm} 2=\frac{4}{2} \hspace{0.5cm} 3=\frac{6}{2} \hspace{0.5cm}  3=\frac{2}{2}+\frac{4}{2} \hspace{0.5cm} 4=\frac{8}{2} \hspace{0.5cm} 4=\frac{2}{2}+\frac{6}{2} \hspace{0.5cm} 5=\frac{2}{2}+\frac{8}{2} \hspace{0.5cm} 5=\frac{4}{2}+\frac{6}{2}
\end{displaymath}
\begin{displaymath}
6=\frac{4}{2}+\frac{8}{2} \hspace{0.5cm} 6=\frac{2}{2}+\frac{4}{2}+\frac{6}{2} \hspace{0.5cm} 7=\frac{6}{2}+\frac{8}{2} \hspace{0.5cm} 7=\frac{2}{2}+\frac{4}{2}+\frac{8}{2} \hspace{0.5cm} 8=\frac{2}{2}+\frac{6}{2}+\frac{8}{2} \hspace{0.5cm} 9=\frac{4}{2}+\frac{6}{2}+\frac{8}{2}
\end{displaymath}
Series:
\begin{equation*}
10=\frac{2}{3}+\frac{4}{3}+\frac{6}{3}+\frac{8}{3}+\frac{10}{3} 
\end{equation*}
Partitions:
\begin{displaymath}
2=\frac{6}{3} \hspace{0.5cm} 2=\frac{2}{3}+\frac{4}{3} \hspace{0.5cm} 4=\frac{2}{3}+\frac{10}{3} \hspace{0.5cm} 4=\frac{4}{3}+\frac{8}{3} \hspace{0.5cm} 4=\frac{2}{3}+\frac{4}{3}+\frac{6}{3} \hspace{0.5cm} 6=\frac{8}{3}+\frac{10}{3}  
\end{displaymath}
\begin{displaymath}
6=\frac{2}{3}+\frac{6}{3}+\frac{10}{3} \hspace{0.5cm} 6=\frac{4}{3}+\frac{6}{3}+\frac{8}{3} \hspace{0.5cm} 8=\frac{6}{3}+\frac{8}{3}+\frac{10}{3} \hspace{0.5cm} 8=\frac{2}{3}+\frac{4}{3}+\frac{8}{3}+\frac{10}{3}
\end{displaymath}

\section{Connection to Diophantine Equations}
\label{sec:10}

Diophantine equations have proved interesting in part due to the difficulty in predicting when integral solutions occur, and if so, how many exist. Matiyasevich proved that there is no general algorithm to predict when these solutions occur \cite{thirteen}, though that does not mean we cannot work with specific cases. Andrew Wiles' proof of Fermat's Last Theorem is a famous example of this fact \cite{{fourteen},{fifteen}}. Just as even numerator series can be connected to finding integral points on a surface that is really a nonlinear Diophantine equation, odd numerator series can also be connected to such an equation. The case for odd numerators is different though, as it contains a variable number of unknowns. 

We find that it is possible to predict whether the equation
$2(n_1+n_2+ \cdots +n_{h-1}+n_h)-(kj+h)=0$ can have integral solutions. We see that in certain situations we can also predict how many there are. This particular class of equations is interesting not only because it is nonlinear, but also because the total number of unknowns is dependent on the value of the variable $h$. 
\begin{proposition}
\label{prp:10.1}
Suppose we have a Diophantine equation of the form 
\begin{equation}
2(n_1+n_2+ \cdots + n_{h-1}+n_h)-(kj+h)=0
\end{equation}

where $n_1$, $n_2,\ldots, n_{h-1}$, $n_h$, $j$, $k$, and $h$ are variables with positive integral values, 
\begin{displaymath}
k<j
\end{displaymath}
\begin{center}
$n_1, n_2, \cdots, n_{h-1},$ and $n_h$ are distinct 
\end{center}
\begin{displaymath}
n_1, n_2, \ldots, n_{h-1}, n_h \leq j
\end{displaymath}
There is never a solution if $j<3$ or $k\geq2h$. If $0<k<2h$ and $j>2$, there can be a solution, but it is not guaranteed. If there are $M$ solutions when using a given $k_1$  and $j$, where $k_1<h<j$, then there are also $M$ solutions for $k_2=2h-k_1$, where $j$ is fixed.
\end{proposition}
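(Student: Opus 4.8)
The plan is to recognize that this proposition is, in essence, a Diophantine restatement of facts already proved for $f_{O_j,h}(k)$, so the whole argument reduces to setting up one translation and then quoting earlier results (plus cleaning up the edge cases those results set aside). First I would record the translation explicitly: a solution $(n_1,\dots,n_h)$ of $2(n_1+\cdots+n_h)=kj+h$ with the $n_i$ distinct positive integers at most $j$ corresponds, via $a_i=2n_i-1$, to a choice of $h$ distinct numerators $a_i$ from $\{1,3,\dots,2j-1\}$ with $\sum_i a_i/j=(2\sum_i n_i-h)/j=k$; this correspondence is a bijection, so the number of solutions for given $(k,j,h)$ is exactly $f_{O_j,h}(k)$. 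Every clause of the proposition is then a statement about this count.

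For the impossibility assertions: if $j<3$ then either $j=1$, in which case no integer $k$ with $0<k<j$ exists at all, or $j=2$, in which case $k=1$ is the only candidate and a direct check — the only available numerators are $1$ and $3$, and no subset of $\{\tfrac12,\tfrac32\}$ sums to $1$ — shows there is no solution (this is the Remark following Proposition~\ref{prp:4.1}). If $k\ge 2h$, then for $h>1$ this is precisely Lemma~\ref{kgreaterthan3}, whose argument in fact excludes every $k\ge 2h$; and for the single remaining case $h=1$, the equation reads $2n_1-1=kj$, which for $k\ge 2$ forces $2n_1-1\ge 2j>2j-1\ge 2n_1-1$, a contradiction with $n_1\le j$. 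This covers ``there is never a solution if $j<3$ or $k\ge 2h$.''

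For the middle clause, the existence half follows from Lemma~\ref{Sum_to_h}: whenever $j>2$ and $h$ is not odd with $j$ even, one of the configurations in that lemma — the centered block of $h$ consecutive numerators, or the first $\tfrac{h}{2}$ together with the last $\tfrac{h}{2}$ — sums to $k=h$, and $0<h<2h$, so a solution does occur in the stated range; that solutions are not guaranteed is witnessed by the tables, e.g.\ $f_{O_3,2}(1)=0$ although $0<1<4$ and $3>2$. For the symmetry clause I would invoke the reflection bijection from the proof of Lemma~\ref{symmetric}: the map $(n_1,\dots,n_h)\mapsto(j+1-n_1,\dots,j+1-n_h)$ keeps the entries distinct and in $\{1,\dots,j\}$ and sends $k_1=(2\sum_i n_i-h)/j$ to $(2h(j+1)-h-2\sum_i n_i)/j=2h-k_1$; since this map is an involution it is a bijection between the solution sets for $k_1$ and for $k_2=2h-k_1$, so the counts agree. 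The only point requiring a moment's care is that $k_1<h<j$ forces $k_2$ into the range $h<k_2<2h$, whereas whether $k_2<j$ is irrelevant to the bijection — if $k_2\ge j$ then both counts are simply $0$, consistent with the claim.

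I do not expect a genuine obstacle here: the proposition packages Proposition~\ref{prp:4.1} and Lemmas~\ref{symmetric}, \ref{kgreaterthan3}, and \ref{Sum_to_h}. The main thing to get right is the bijection between solution counts of the Diophantine equation and $f_{O_j,h}(k)$, and the verification of the $h=1$ and $j\le 2$ corner cases that the cited lemmas implicitly excluded; everything else is bookkeeping.
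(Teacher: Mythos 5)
Your proposal is correct and follows essentially the same route as the paper: translate the equation into the restricted fraction-partition count $f_{O_j,h}(k)$, then quote Proposition \ref{prp:4.1} for the $j>2$, $k<j$ requirements, the Section \ref{sec:7} argument (Lemma \ref{kgreaterthan3}) for $k\geq 2h$, and the reflection argument of Lemma \ref{symmetric} for the $k_1\leftrightarrow 2h-k_1$ count equality. Your explicit handling of the $h=1$ and $j\le 2$ corner cases and the existence witness via Lemma \ref{Sum_to_h} is slightly more careful than the paper's own write-up, but it is the same argument in substance.
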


\begin{proof}
When we fix the number of terms per combination to $h$, we are really working with a Diophantine equation of the form:
\begin{equation}
    \left(\frac{2n_1 - 1}{j} + \frac{2n_2 - 1}{j} + \cdots + \frac{2n_{h-1} - 1}{j} + \frac{2n_h - 1}{j} \right)=k
\end{equation}
\begin{equation}
    \frac{2\bigr(n_1+n_2+ \cdots +n_{h-1}+n_h\bigr)-h}{j}=k
\end{equation}
\begin{center}
    $n_1, n_2, \cdots ,n_{h-1},$ and $n_h$ are distinct.
\end{center}
\begin{displaymath}
    n_1, n_2, \ \ldots, \ n_{h-1}, n_h \leq j
\end{displaymath}
\begin{displaymath}
    k<j, \ j>2
\end{displaymath}

We can rewrite as:
\begin{equation}
    2(n_1+n_2+ \cdots +n_{h-1}+n_h)-(kj+h)=0
\end{equation}
This equation is then nonlinear because it includes the product of two variables. 

Proposition \ref{prp:4.1} stipulates that we must have $j>2$ and $k<j$ for there to be any combinations for $k$, regardless of the number of terms used. In Section \ref{sec:7} we proved that there is never a solution if $k\geq2h$, where $h>1$. This is also true when $h=1$. All numerators are less than or equal to $2j-1$, hence no $k \geq 2$ can be found using one term. We know that $k$ must be positive, as all fractions in the series for $j$ are positive. We thus know that for there to be a possibility of a solution, $0<k<2h$. In our proof for symmetry, we found that if there are $M$ solutions for $k_1$, where $k_1<h<j$, then there are also $M$ solutions for $k_2=2h-k_1$. Using this result, we then have
$f_{O_j,h}(k_1)=f_{O_j,h}(k_2)$.
\end{proof}

\newpage 

{\footnotesize
}

{\footnotesize  
\medskip
\medskip
\vspace*{1mm} 
 
\noindent {\it Zachary Hoelscher}\\  
Virginia Polytechnic Institute and State University\\
Blacksburg, Virginia 24061\\
E-mail: {\tt zacharyh22@vt.edu}\\ \\  

\noindent {\it Eyvindur Palsson}\\  
Virginia Polytechnic Institute and State University \\
Blacksburg, Virginia 24061\\
E-mail: {\tt palsson@vt.edu}\\ \\

}

\end{document}